\newcommand{\Haus}{\dim_{\mathrm{H}}}
\newtheorem*{thm*}{Theorem}
\newtheorem{thm}{Theorem}[section]
\newtheorem{lma}[thm]{Lemma}
\newtheorem{cor}[thm]{Corollary}
\newtheorem{defn}[thm]{Definition}
\newtheorem{conj}[thm]{Conjecture}
\newtheorem{rem}[thm]{Remark}
\newtheorem{ques}[thm]{Question}
\begin{document}
	
	\title{Times two, three, five orbits on $\mathbb{T}^2$}
	
	\author{Han Yu}
	\address{Han Yu\\
		Department of Pure Mathematics and Mathematical Statistics\\University of Cambridge\\CB3 0WB \\ UK }
	\curraddr{}
	\email{hy351@cam.ac.uk}
	\thanks{ }
	
	\subjclass[2010]{37C85, 57M60, 37A45 }
	
	\keywords{dynamical system, diagonal actions, torus automorphism}
	
	\date{}
	
	\dedicatory{}
	
	\begin{abstract}
		In this paper, we study orbit closures under diagonal torus actions. We show that if $(x,y)\in\mathbb{T}^2$ is not contained in any rational lines, then its orbit under the $\times 2, \times 3, \times 5$ actions is dense in $\mathbb{T}^2.$ 
	\end{abstract}
	\maketitle
	\allowdisplaybreaks
	\section{Introduction and backgrounds}
	\subsection{Backgrounds}
	In this paper, we are interested in subsets of $\mathbb{T}^d,d\geq 2$ which are simultaneously invariant under multiple torus endomorphisms. One of the first results in this area is due to Furstenberg, see \cite[PART IV]{Fu1}.
	
	\begin{thm}[Furstenberg]
		Let $A\subset\mathbb{T}$ be a closed $\times 2, \times 3$ invariant set. Then either $A$ is a finite set of rational numbers or $A=\mathbb{T}.$
	\end{thm}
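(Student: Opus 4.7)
The plan is to split on whether $A$ is finite or infinite. If $A$ is finite, then every $x \in A$ is pre-periodic under the doubling map $T_2(y) = 2y \bmod 1$, so $2^m x \equiv 2^n x \pmod 1$ for some $0 \le m < n$, which gives $(2^n - 2^m) x \in \mathbb{Z}$ and hence $x \in \mathbb{Q}$.

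Assume now $A$ is infinite; the goal is to show $A = \mathbb{T}$. By Zorn's lemma $A$ contains a minimal non-empty closed $\times 2, \times 3$-invariant subset $M$. If this $M$ is finite, we first pass to a suitable accumulation-point set of $A$ (which remains closed and invariant) and iterate a Cantor--Bendixson style reduction to produce an infinite minimal invariant set. So it suffices to show that every infinite minimal closed $\times 2, \times 3$-invariant $M \subset \mathbb{T}$ equals $\mathbb{T}$.

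The key object to study is the difference set
\[
B := \overline{\{x - y \bmod 1 : x,y \in M\}} \subset \mathbb{T},
\]
which is closed, contains $0$, and is $\times 2, \times 3$-invariant since $n(x-y) = nx - ny$. Because $M$ is infinite and compact, $0$ is non-isolated in $B$. We then prove as a sublemma: any closed $\times 2, \times 3$-invariant $B \subset \mathbb{T}$ in which $0$ is non-isolated equals $\mathbb{T}$. Its proof uses the Diophantine fact that $\log 2$ and $\log 3$ are linearly independent over $\mathbb{Q}$, so that $\{a \log 2 + b \log 3 : a,b \ge 0\}$ is dense in $[L,\infty)$ for every $L > 0$. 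Given $b_n \in B$ with $b_n \to 0^+$ and a target $\alpha \in \mathbb{T}$, we choose $(a,b)$ and an integer $k$ with $2^a 3^b b_n - k \approx \alpha$, producing a point of $B$ arbitrarily close to $\alpha$.

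The main obstacle will be the final step: upgrading $B = \mathbb{T}$ to $M = \mathbb{T}$, since in general $M - M = \mathbb{T}$ is strictly weaker than $M = \mathbb{T}$ (witness $M = [0, 1/2]$). Here the minimality of $M$ must be used essentially: for any $x_0 \in M$ the forward semigroup orbit $\{2^a 3^b x_0\}_{a,b \ge 0}$ is dense in $M$, and combining uniform recurrence of orbits with the density of $M - M$ in $\mathbb{T}$ should let us conclude that $M$ itself meets every open arc of $\mathbb{T}$.
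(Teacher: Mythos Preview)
The paper does not supply its own proof of Furstenberg's theorem; the result is quoted as background with a reference to \cite{Fu1}. What the paper does spell out (in the section titled ``$\times 2,\times 3$ phenomena and Furstenberg's argument'') is precisely your sublemma: if $0$ is a non-isolated point of a closed $\times 2,\times 3$ invariant $B\subset\mathbb{T}$, then $B=\mathbb{T}$, argued via the eventual density of the additive semigroup generated by $\log 2,\log 3$. On that ingredient your write-up and the paper's exposition coincide.

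Your outline, however, has two real gaps. First, the Cantor--Bendixson step does not deliver what you claim. Passing to the derived set (or even to the perfect kernel) of $A$ yields a closed invariant set without isolated points, but a minimal closed invariant subset of a perfect set can perfectly well be finite: $\{0\}$ is a minimal $\times 2,\times 3$ invariant subset of $\mathbb{T}$ itself. There is no general device that manufactures an \emph{infinite} minimal set out of an infinite invariant one, and Furstenberg's argument does not proceed by first securing such a set.

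Second---and you acknowledge this---the passage from $M-M=\mathbb{T}$ to $M=\mathbb{T}$ is left as a hope (``should let us conclude''). Uniform recurrence by itself is not enough to bridge this; your counterexample $[0,1/2]$ already shows that density of differences is strictly weaker than density. In Furstenberg's actual proof the minimality is exploited differently: one studies sets such as $A-M$ or $\{t:M+t\subset A\}$, shows they are closed and $S$-invariant, and then applies the sublemma to \emph{those} auxiliary sets rather than to $M-M$ directly. Without some argument of that shape, the proof is genuinely incomplete.
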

	\begin{rem}
		This result also holds if we replace $2,3$ by another pair of integers $p,q>1$ such that $\log p/\log q\notin \mathbb{Q}.$
	\end{rem}
	Later on, this result was extended to deal with higher dimensional torus endomorphisms, see \cite{B84}. Let $d\geq 2$ and $\Sigma$ be a commutative  sub semigroup of $GL_d(\mathbb{Z}).$ Under an irreducibility condition on $\Sigma$, the only infinite closed $\Sigma$-invariant subset of $\mathbb{T}^d$ is $\mathbb{T}^d$ itself. The precise condition for $\Sigma$ is technical. Essentially, the most important part is that elements in $\Sigma$ do not have any common non-trivial invariant subspaces (subtori), see \cite{B84}, \cite{LW12} for more detailed discussions.
	
	One might be interested in dropping the irreducibility condition, which played a very central role in the proofs in \cite{B84}. However, without this condition, one cannot hope to obtain a result mentioned above. For example, consider $\times 2, \times 3$ actions on $\mathbb{T}^2.$ Clearly, all homogeneous lines (for example, the diagonal line) are invariant under both $\times 2, \times 3.$ It is natural to guess that essentially all closed invariant sets are special in some sense. In \cite{MP99} the following result was proved.
	
	\begin{thm}[Meiri and Peres]
		Let $d\geq 2$ be an integer. Let $A\subset\mathbb{T}^d$ be a closed $\times 2, \times 3$ invariant set. Then $\Haus A\in \{0,1,\dots,d\}.$
	\end{thm}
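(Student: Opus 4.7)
The natural approach is induction on $d$. The base case $d=1$ is exactly the Furstenberg theorem quoted above: any closed $\times 2, \times 3$-invariant subset of $\mathbb{T}$ is either finite (Hausdorff dimension~$0$) or all of $\mathbb{T}$ (Hausdorff dimension~$1$). It is convenient throughout the induction to work with the more general statement for an arbitrary multiplicatively independent pair $(p,q)$, as in the remark following that theorem, since the slicing step below naturally produces actions by nontrivial powers of $2$ and $3$.

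For the inductive step, let $A\subseteq\mathbb{T}^d$ be closed and $\times 2, \times 3$-invariant. Because the action is diagonal, the first-coordinate projection $\pi(A)\subseteq\mathbb{T}$ is itself closed and $\times 2, \times 3$-invariant, so by Furstenberg it is either finite or all of $\mathbb{T}$. If $\pi(A)$ is finite, it consists of finitely many joint periodic orbits; for each such orbit $O$ pick $a,b\geq 1$ with $T_2^a=T_3^b=\mathrm{id}$ on $O$, identify the fiber $A\cap(\{y\}\times\mathbb{T}^{d-1})$ at any $y\in O$ with a closed subset of $\mathbb{T}^{d-1}$, and observe that this subset is invariant under $\times 2^a,\times 3^b$. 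The (pair-generalized) inductive hypothesis gives it integer Hausdorff dimension in $\{0,\dots,d-1\}$, and a finite union of such slabs preserves this.

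The case $\pi(A)=\mathbb{T}$ is the heart of the matter; the target is the dimension conservation identity
\[
\Haus A \;=\; 1 + \Haus\bigl(A\cap\pi^{-1}(0)\bigr).
\]
Since $0$ is a common fixed point of $T_2$ and $T_3$, the slice on the right is closed and $\times 2, \times 3$-invariant in $\mathbb{T}^{d-1}$, and the inductive hypothesis then forces its dimension into $\{0,1,\dots,d-1\}$, yielding $\Haus A\in\{1,\dots,d\}$. To establish the identity I would work with a $\times 2, \times 3$-invariant Borel probability measure $\mu$ on $A$ with $\dim\mu$ arbitrarily close to $\Haus A$ (produced by the variational principle, using that $\Haus A=\boxd A$ for such self-similar invariant sets); set $\nu:=\pi_*\mu$ and note that $\mathrm{supp}\,\nu=\mathbb{T}$ forces $\nu$ to contain a Lebesgue ergodic component, since by Rudolph-Johnson every ergodic $\times 2, \times 3$-invariant probability on $\mathbb{T}$ is either atomic on a finite orbit or Lebesgue, whence $\dim\nu=1$; a Marstrand / Ledrappier-Young slicing decomposition then yields $\dim\mu_y=\Haus A-1$ on a set of $\nu$-positive measure of fibers. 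The main obstacle is the final step: transporting this generic slice dimension to the specific fiber over~$0$, which a priori could be anomalously small. Overcoming this requires genuine simultaneous use of both invariances, for example by iterating $T_2^aT_3^b$ to bring typical fibers arbitrarily close to the one over~$0$ and coupling this with the self-similar rescaling of $A$ at matched dyadic and triadic scales.
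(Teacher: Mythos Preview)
This theorem is quoted in the paper as a background result of Meiri and Peres \cite{MP99}; the present paper does not supply its own proof, so there is no in-paper argument to compare your attempt against.

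On its own merits, your sketch contains a genuine error. You assert that ``by Rudolph--Johnson every ergodic $\times 2, \times 3$-invariant probability on $\mathbb{T}$ is either atomic on a finite orbit or Lebesgue.'' That is not the Rudolph--Johnson theorem: their result assumes the ergodic measure has \emph{positive entropy} under (at least) one of the two maps, and only under that hypothesis concludes the measure is Lebesgue. The unconditional dichotomy you invoke is precisely Furstenberg's measure rigidity conjecture, which is open. Consequently you cannot infer $\dim\nu=1$ from $\mathrm{supp}\,\nu=\mathbb{T}$ alone, since zero-entropy, non-atomic, fully supported ergodic components are not ruled out, and for such components the local dimension of $\nu$ would be strictly less than $1$. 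Your variational setup then fails to produce a measure witnessing the claimed slicing identity.

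You also honestly flag the obstacle of transferring the generic-fiber dimension to the specific fiber over $0$, but the proposed fix (``bring typical fibers arbitrarily close to the one over $0$'') is not an argument: Hausdorff dimension is neither upper- nor lower-semicontinuous under Hausdorff convergence of compact sets, so proximity of fibers yields no dimensional comparison by itself. A workable proof must supply an independent mechanism at this step. The actual Meiri--Peres argument does not go through Rudolph--Johnson plus Ledrappier--Young slicing in the way you outline; it is organized around entropy (via Furstenberg's identity between Hausdorff dimension and topological entropy for closed $\times p$-invariant sets) rather than around an unproved measure classification.
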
 
	
	The result in \cite{MP99} is more general than what we stated here, a measure-theoretic version of the above result was also presented. Here $\Haus A$ is the Hausdorff dimension of $A,$ see \cite{Fa} for a systematic introduction. As a rational point has dimension zero, a line has dimension one, and $\mathbb{T}^2$ has dimension two, we see that the above result gives a coarse classification of closed invariant sets. In $\mathbb{T}^2$, it is easy to see that if $\Haus A=0$ then $A$ is finite and if $\Haus A=2$ then $A=\mathbb{T}^2.$ The intriguing case is when $\Haus A=1.$ We know that $A$ contains at most finitely many lines. However, $A$ could be a complicated union of lines and rational points. To have a clear view, we consider orbit closures. That is, $A$ is the topological closure of an orbit under $\times 2, \times 3.$ In this case, it might be the case that the following conjecture holds.
	\begin{conj}\label{conjecture}
		Let $d\geq 2$ be an integer. Let $A\subset\mathbb{T}^d$ be an orbit closure under $\times 2, \times 3$ actions. Then $A$ is a union of finitely many (possibly non-homogeneous) subtori.
	\end{conj}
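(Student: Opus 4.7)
The plan is to proceed by a dimension trichotomy, leveraging the Meiri--Peres result stated above. First I would apply that theorem in its $d$-dimensional form to obtain $\Haus A \in \{0,1,\dots,d\}$ and dispose of the boundary cases: $\Haus A = 0$ forces $A$ finite, whence by $\times 2, \times 3$ invariance every point of $A$ is rational (hence a $0$-dimensional subtorus), while $\Haus A = d$ gives $A = \mathbb{T}^d$. The substance of the conjecture is therefore the intermediate range $1 \le \Haus A \le d-1$, and I would attempt to reduce this to the base case $d = 2, \Haus A = 1$ by inducting on $d$ through rational coordinate projections, using that both $\times 2$ and $\times 3$ commute with every rational linear map $\mathbb{T}^d \to \mathbb{T}^k$.

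For the base case $d = 2, \Haus A = 1$, the plan is to examine the coordinate projections of $A$ onto $\mathbb{T}$. Each projection is a closed $\times 2, \times 3$ invariant subset, so by Furstenberg's theorem it is either finite or all of $\mathbb{T}$. If one projection is finite, a fibre analysis combined with the Meiri--Peres dimension constraint should show that $A$ is a finite union of horizontal or vertical circles. In the case where both projections are surjective, the task is to argue that the one-dimensional set $A$ is a finite union of (possibly skew) subtori; here I would try to combine Furstenberg's theorem applied along generic fibres with affine rigidity on $\mathbb{T}$ to show that the local structure of $A$ is linear with rational slopes, and then propagate this locally linear structure globally using the orbit of any single point of $A$ under $\times 2, \times 3$.

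The main obstacle, and indeed the reason this is posed as a conjecture, is the gap between a \emph{metric} statement (Hausdorff dimension is an integer) and a \emph{topological/algebraic} one (finite union of subtori). The Meiri--Peres method is essentially measure-theoretic and loses the fine topological information about how $A$ sits inside a putative invariant hypersurface; in particular, it does not rule out $A$ being a topologically thin, fractal-like subset of a union of subtori without actually being such a union. Overcoming this would require a genuinely topological rigidity input beyond what Furstenberg's theorem on $\mathbb{T}$ provides, which is precisely what makes the full conjecture inaccessible with currently known techniques and motivates the weaker statement proved in the present paper using the additional $\times 5$ action.
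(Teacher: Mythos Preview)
The statement you are addressing is labelled a \emph{conjecture} in the paper, and the paper does not supply a proof of it; it is stated as an open problem. There is therefore no ``paper's own proof'' against which to compare your attempt.

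You yourself acknowledge this in your final paragraph: the gap between the Meiri--Peres integer-dimension conclusion and the desired finite-union-of-subtori conclusion is exactly the obstruction, and you correctly identify it as the reason the statement remains conjectural. Your outline (dimension trichotomy, disposal of the boundary cases, reduction by rational projections, then a putative fibre/local-linearity argument for the base case $d=2$, $\Haus A=1$) is a reasonable scaffolding, but the heart of it is not a proof. The sentence ``combine Furstenberg's theorem applied along generic fibres with affine rigidity on $\mathbb{T}$ to show that the local structure of $A$ is linear with rational slopes, and then propagate this locally linear structure globally'' is precisely the missing idea, and nothing in your proposal supplies it.

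Two technical remarks on your boundary cases. The claim that $\Haus A=0$ forces $A$ finite is correct in every dimension: each coordinate projection is closed, $\times 2,\times 3$-invariant, and has Hausdorff dimension at most $\Haus A=0$, hence is finite by Furstenberg, so $A$ lies in a finite grid. By contrast, the claim that $\Haus A=d$ forces $A=\mathbb{T}^d$ is asserted in the paper only for $d=2$; for general $d$ it is not obvious and you have not justified it.

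What the paper actually proves is Theorem~\ref{MAIN}: for $d=2$, and with the additional $\times 5$ action, the orbit closure of any point not on a rational line is all of $\mathbb{T}^2$. The method there (local direction sets $Der_a(A)$, Lemma~\ref{LM1}, and the pre-image tracking argument of Lemma~\ref{MAINLEMMA}) is entirely different from the dimension-based route you sketch and does not pass through Meiri--Peres at all.
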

	A homogeneous subtorus is a linear subspace of $\mathbb{T}^d$. A non-homogeneous subtorus is a translated copy of a homogeneous subtorus. In the setting of toral automorphisms, a related result was proved in \cite[Theorem 1.5]{LW12}.
	\begin{thm}
		Let $L_1,L_2,L_3$ be three automorphisms on $\mathbb{T}^d$ with at least one of the actions being totally irreducible. Consider the diagonal actions $L^{\Delta}_i,i\in\{1,2,3\}$ on $\mathbb{T}^d\times \mathbb{T}^d$ by
		\[
		L^{\Delta}_i(x,y)=(L_i(x),L_i(y)).
		\]
		Then any closed $L^{\Delta}_1,L^{\Delta}_2,L^{\Delta}_3$ invariant set is a union of finitely many (possibly non-homogeneous) subtori.
	\end{thm}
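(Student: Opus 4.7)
The plan is to combine a measure-theoretic rigidity statement for the diagonal action with a topological fiber analysis. Let $A\subset \mathbb{T}^d\times\mathbb{T}^d$ be a closed set invariant under all three $L_i^{\Delta}$, and let $\mu$ be any ergodic invariant probability measure supported on $A$, which one can produce as a weak-$*$ cluster point of empirical averages along an orbit in $A$. If I can classify all such $\mu$ as Haar measures on non-homogeneous subtori, then a standard exhaustion argument applied to the collection of all invariant measures on $A$ recovers the structure of $A$ itself as a union of subtori; a separate finiteness argument will then be needed at the end.

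For the measure classification, I would invoke high-entropy methods in the spirit of Einsiedler-Katok-Lindenstrauss, adapted to this product setting. The target statement is: any ergodic $\langle L_1^{\Delta},L_2^{\Delta},L_3^{\Delta}\rangle$-invariant measure whose projection to the first factor has positive entropy for some $L_i$ must be the Haar measure on a non-homogeneous subtorus of $\mathbb{T}^d\times\mathbb{T}^d$. The total irreducibility of, say, $L_1$ enters here to eliminate nontrivial $L_1$-equivariant factor foliations of $\mathbb{T}^d$, which would otherwise support exotic invariant measures transverse to the product structure.

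The zero-entropy regime needs a separate treatment. The first-coordinate projection $(\pi_1)_*\mu$ is $\langle L_1,L_2,L_3\rangle$-invariant on $\mathbb{T}^d$; by Berend's theorem, which applies once one checks that total irreducibility of $L_1$ supplies the necessary proximality and non-virtually-cyclic conditions, this projection is either Haar or supported on finitely many rational points. The rational case is handled inductively by restricting to fibers over a periodic orbit. In the Haar case, the disintegration $\{\mu_x\}$ forms a measurable family satisfying $L_i(\operatorname{supp}\mu_x)\subseteq \operatorname{supp}\mu_{L_i x}$, and combining this equivariance with conditional entropy forces the fibers to be translates of a fixed subtorus for $(\pi_1)_*\mu$-almost every $x$. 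Taking topological closure then yields a union of (possibly non-homogeneous) subtori, and the finiteness follows from the observation that distinct invariant subtori cannot accumulate without creating an invariant subtorus of strictly larger dimension.

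The main obstacle is the measure rigidity step. The diagonal action on $\mathbb{T}^{2d}$ is genuinely reducible (the diagonal $\{(x,x)\}$ is already an invariant subtorus), so classical Berend-type arguments on the product space fail outright. One must instead leverage the total irreducibility of $L_1$ on each individual factor, combined with delicate conditional-entropy estimates in the transverse direction, to eliminate non-algebraic measures. The entropy analysis is particularly subtle because the Lyapunov spectrum of $L_i^{\Delta}$ has each exponent appearing with multiplicity two, so one cannot straightforwardly separate expanding and contracting directions as in the single-factor case, and the usual product formulas for entropy do not by themselves pin down the measure.
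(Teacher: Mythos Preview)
The paper does not prove this theorem. It is quoted there as \cite[Theorem 1.5]{LW12}, a result of Lindenstrauss and Wang, and is included only as background context motivating the paper's own work on the $\times 2,\times 3,\times 5$ action on $\mathbb{T}^2$. There is therefore no proof in the paper against which to compare your proposal.

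On the proposal itself: what you have written is an outline, not a proof, and several of the steps are genuine gaps rather than routine omissions. First, the measure-rigidity step is the heart of the matter and you do not actually carry it out; you correctly identify the obstruction (each Lyapunov exponent of $L_i^{\Delta}$ has multiplicity two, so the coarse Lyapunov foliations are two-dimensional and the usual one-dimensional leafwise arguments do not apply), but you give no mechanism for overcoming it. Second, the passage from ``every ergodic invariant measure is algebraic'' to ``every closed invariant set is a finite union of subtori'' is not a standard exhaustion argument: knowing the measures does not by itself preclude, for instance, a closed invariant set that is an infinite union of tori accumulating in a complicated way, and your one-line finiteness claim (``distinct invariant subtori cannot accumulate without creating an invariant subtorus of strictly larger dimension'') would need a real proof. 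Third, your appeal to Berend's theorem for the projected action on $\mathbb{T}^d$ is not immediate: total irreducibility of a single $L_1$ does not by itself imply that the semigroup $\langle L_1,L_2,L_3\rangle$ satisfies Berend's hypotheses (one needs, inter alia, a hyperbolic element and that the semigroup is not virtually cyclic), so this requires additional assumptions or argument. For the record, the actual proof in \cite{LW12} is largely topological in the Berend tradition rather than measure-theoretic in the Einsiedler--Katok--Lindenstrauss style you sketch.
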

	\subsection{Main results in this paper}
	For clearness, given an integer $k\geq 1$, the $\times k$ map acts on $\mathbb{T}^2$ by multiplying $k$ on all coordinates. We write it as $T_k$ for short.  
	\subsection{$\times 2, \times 3, \times 5$ invariant subsets of $\mathbb{T}^2$}
	We start  with the following question.
	\begin{ques}\label{Ques}
		Let $(x,y)\in\mathbb{T}^2$ be an arbitrary point. Construct the orbit:
		\[
		Orb_{2,3,5}(x,y)=Orb_{T_2,T_3,T_5}(x,y)=\{T^{k_2}_2T^{k_3}_3T^{k_5}_5(x,y)\}_{k_2,k_3,k_5\in\mathbb{N}}.
		\]
		What can we say about $Orb_{2,3,5}(x,y)$?
	\end{ques}
	
	There are some simple cases to point out. Let $(x,y)$ be a rational point (i.e. $x,y\in\mathbb{Q}$). Then $Orb_{2,3,5}(x,y)$ is a finite set of rational points. The case when precisely one of $x,y$ is rational is similar. In this case, the orbit is dense in a set of lines. Suppose now that $x,y$ are not rational but $x,y$ are rationally related (i.e. we can find integers $m,n,k$ such that $(m,n,k)=1$ and $mx+ny=k$). In this case we see that $mx'+ny'\in\mathbb{Z}$ holds for each $(x',y')\in Orb_{2,3,5}(x,y).$ Since $(x,y)$ is not a rational point, we can use Furstenberg's theorem to conclude that $Orb_{2,3,5}(x,y)$ is dense in a union of lines. The last case is when $x,y$ are not rational nor rationally related. One can guess that $Orb_{2,3,5}(x,y)$ is dense in $\mathbb{T}^2.$ We will prove this guess.
	\begin{thm}\label{MAIN}
		Let $(x,y)\in\mathbb{T}^2$ be such that $x,y$ are not rational nor rationally related. Then $Orb_{2,3,5}(x,y)$ is dense in $\mathbb{T}^2.$ The converse is also true.
	\end{thm}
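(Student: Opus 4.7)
My plan is to study $A := \overline{Orb_{2,3,5}(x,y)} \subseteq \mathbb{T}^2$, a closed $T_2, T_3, T_5$-invariant set, and show $A = \mathbb{T}^2$ under the hypothesis. That hypothesis (neither rational nor rationally related) is equivalent to $1, x, y$ being $\mathbb{Q}$-linearly independent. The converse is immediate: if $x \in \mathbb{Q}$ or $y \in \mathbb{Q}$, one coordinate of the orbit is finite, so $A$ is not dense; otherwise, if $mx + ny \in \mathbb{Z}$ for some $(m,n) \in \mathbb{Z}^2 \setminus \{(0,0)\}$, then for every $k \in S := \{2^a 3^b 5^c : a, b, c \in \mathbb{N}\}$ one has $m(kx) + n(ky) = k(mx+ny) \in \mathbb{Z}$, confining the orbit to a finite union of rational lines.

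For the forward direction I would first extract structural constraints. The projections $\pi_i(A) \subseteq \mathbb{T}$ are closed and $T_2, T_3, T_5$-invariant, so by Furstenberg's theorem each is finite-rational or all of $\mathbb{T}$; since $x, y$ are irrational, $\pi_1(A) = \pi_2(A) = \mathbb{T}$. The Meiri--Peres theorem gives $\Haus A \in \{0, 1, 2\}$, and the surjectivity of the projections forces $\Haus A \geq 1$. The fiber $A_0 := \{v \in \mathbb{T} : (0,v) \in A\}$ is closed and $T_k$-invariant (since $T_k(0,v) = (0,kv)$), so Furstenberg again forces $A_0$ to be either finite-rational or all of $\mathbb{T}$. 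A useful auxiliary set is $U := \{u \in \mathbb{T} : \{u\} \times \mathbb{T} \subseteq A\}$, which is closed (because $A$ is) and $T_k$-invariant (because $T_k(\{u\} \times \mathbb{T}) = \{ku\} \times \mathbb{T}$), hence by Furstenberg also finite-rational or all of $\mathbb{T}$.

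The strategy is then two-step: (a) show $A_0 = \mathbb{T}$, so $0 \in U$; (b) upgrade this to $U = \mathbb{T}$, which gives $A = \mathbb{T}^2$. For (a) I would use the density of $\{kx \bmod 1 : k \in S\}$ (from Furstenberg) to produce sequences $k_n \in S$ with $k_n x \to 0 \pmod 1$, study the accumulation set of the corresponding $k_n y$, and combine Meiri--Peres dimension control with the $\mathbb{Q}$-independence of $1, x, y$ to preclude $A_0$ being a finite rational set. For (b), once $0 \in U$, I would aim to show $U$ contains an irrational point (for instance by showing $\{x\} \times \mathbb{T} \subseteq A$ via a parallel fiber argument anchored at $x$); then density of $\{kx \bmod 1 : k \in S\}$ and $T_k$-invariance of $U$ would force $U = \mathbb{T}$.

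The main obstacle is step (a), proving $A_0 = \mathbb{T}$, together with the analogous rigidity required in step (b). The reason $\times 5$ is essential here is precisely the failure of algebraic irreducibility: the three diagonal actions $T_2, T_3, T_5$ share every invariant subtorus, so Berend-type irreducibility arguments are unavailable. The proof must instead exploit the $\mathbb{Q}$-linear independence of $\log 2, \log 3, \log 5$, which makes $\log S$ three-dimensionally dense in $\mathbb{R}_+$ and supplies multiplicative flexibility absent in the two-prime setting. I expect the technical heart of the argument to combine a self-similarity analysis along the return sequences $k_n x \to 0$ with a refined Meiri--Peres-type dimension count on fibers, using the three-prime structure of $S$ to rule out the existence of a proper invariant fiber $A_0 \subsetneq \mathbb{T}$ under the hypothesis that no $\mathbb{Q}$-relation holds among $1, x, y$.
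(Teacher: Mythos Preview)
Your handling of the converse is fine, but the forward strategy has two concrete failures.

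First, step (b) breaks: the fiber $\{v:(x,v)\in A\}$ over the irrational point $x$ is \emph{not} $T_k$-invariant, since $T_k(x,v)=(kx,kv)$ lands over $kx\neq x$. So there is no ``parallel fiber argument anchored at $x$'', and Furstenberg does not apply there. Knowing only $0\in U$ gives you one vertical line; promoting this to an irrational point in $U$ is essentially the whole theorem, and nothing in your outline supplies it. Second, you have misidentified why $\times 5$ matters. The Furstenberg density argument already works with any two multiplicatively independent integers; the $\mathbb{Q}$-linear independence of $\log 2,\log 3,\log 5$ adds nothing there (and ``$\log S$ three-dimensionally dense in $\mathbb{R}_+$'' is not meaningful for a subset of $\mathbb{R}$). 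The actual role of the third prime is arithmetic pigeonhole: given a nonzero rational $r$, one can always choose \emph{two} of $\{2,3,5\}$ coprime to its denominator, so that the corresponding sub-orbit of $r$ never hits $0$ (Lemma~\ref{235}). This lets one run Furstenberg-style arguments at rational points off the $Y$-axis without collapsing onto it; with only $\times 2,\times 3$ a denominator of $2$ or $3$ kills this.

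The paper's route is quite different from fibers and does not use Meiri--Peres. It works with the \emph{local direction set} $Der_{\mathbb{Q}}(A)\subset\mathbb{P}^1(\mathbb{R})$ of directions along which $A$ accumulates at rational points. Each direction there forces a full rational line into $A$ (Lemma~\ref{LM1}), so $\#Der_{\mathbb{Q}}(A)=\infty$ implies $A=\mathbb{T}^2$ (Lemma~\ref{manydirections}). The core is a \emph{pre-image tracking} argument (Lemma~\ref{MAINLEMMA}): if $Der_{\mathbb{Q}}(A)$ were finite with $(0,1)$ among its directions, one follows orbit points near $\{0\}\times\mathbb{T}$ backward in the semigroup; Lemma~\ref{235} and the geometry of a thin rhombus neighbourhood force each pre-image to remain near $\{0\}\times\mathbb{T}$ and strictly closer, so iterating drags the initial point $(x,y)$ onto the $Y$-axis, contradicting irrationality of $x$.
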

	We will prove this theorem in Section \ref{track}. Several remarks are in order.
	\begin{rem}
		The authors of [LW12] have announced that their methods can be extended to cover Theorem \ref{MAIN},
		the details of which are to be published in a forthcoming paper. See [LW12, Page 1308, last paragraph]. However, our proof in this paper is based on an entirely different approach.
	\end{rem}
	\begin{rem}
		We note that when $T_2,T_3,T_5$ are replaced by two actions $A_1,A_2$ which are irreducible (without common invariant non-trivial subspaces) then the above theorem is turned into a result in \cite{B84}. A key point here is to overcome the difficulty caused by the fact that $T_2,T_3,T_5$ are all diagonal actions carrying many common invariant non-trivial subspaces.
	\end{rem}
	\begin{rem}\label{23}
		It is perhaps strange that one needs three actions instead of two in Theorem \ref{MAIN}. We do not think that all three actions are necessary. However, at this stage, we are not able to tell whether or not $T_2,T_3$ are already sufficient to obtain a dense orbit in $\mathbb{T}^2$ in the statement of Theorem \ref{MAIN}.
	\end{rem}
	\begin{rem}
		One may ask whether this result can be proved on $\mathbb{T}^d, d\geq 3.$ That is, whether it is true or not that for $x\in\mathbb{T}^{d},$ which is not contained in any rational affine subspaces, the $\times 2, \times 3,\times 5$ orbit of $x$ is dense in $\mathbb{T}^d.$ This is not true. In fact, \cite[Theorem 2]{M10} shows that this is not true for $d\geq 6.$ The situation is unclear for $d=3,4,5.$
	\end{rem}
	We point out a direct consequence of Theorem \ref{MAIN} on simultaneous Diophantine approximation.
	\begin{cor}\label{Cor}
		Let $x,y$ be irrational numbers which are not rationally related. Then for each $(\alpha,\beta)\in [0,1)^2$ we have
		\[
		\liminf_{k_2,k_3,k_5\in \mathbb{N}} \max\{\|2^{k_2}3^{k_3}5^{k_5} x-\alpha\|,\|2^{k_2}3^{k_3}5^{k_5} y-\beta\|\}=0.
		\]
		Here $\|x\|$ is the distance between $x$ and the set $\mathbb{Z}.$
	\end{cor}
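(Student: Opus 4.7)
The plan is to deduce this as an essentially immediate consequence of Theorem \ref{MAIN}, the only subtle point being the translation between topological density on $\mathbb{T}^2$ and the simultaneous Diophantine quantity on the right-hand side.

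First, I would observe that the action of $T_2^{k_2} T_3^{k_3} T_5^{k_5}$ on the point $(x,y) \in \mathbb{T}^2$ is just multiplication by the positive integer $N := 2^{k_2} 3^{k_3} 5^{k_5}$ on both coordinates, so that
\[
T_2^{k_2} T_3^{k_3} T_5^{k_5}(x,y) = (Nx \bmod 1,\ Ny \bmod 1).
\]
Under the hypotheses on $x$ and $y$ (irrational and not rationally related), Theorem \ref{MAIN} says that $Orb_{2,3,5}(x,y)$ is dense in $\mathbb{T}^2$. Hence given any $(\alpha,\beta) \in [0,1)^2 \cong \mathbb{T}^2$, there exists a sequence of triples $(k_2^{(n)}, k_3^{(n)}, k_5^{(n)})$ such that the orbit points converge to $(\alpha,\beta)$ in the standard metric on $\mathbb{T}^2$.

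Next I would unwind what this convergence means. The standard metric on $\mathbb{T}^2 = \mathbb{R}^2/\mathbb{Z}^2$ is, up to equivalence, $d((u_1,u_2),(v_1,v_2)) = \max\{\|u_1-v_1\|,\|u_2-v_2\|\}$, where $\|\cdot\|$ denotes distance to the nearest integer. Therefore the convergence above is precisely the statement that, along the chosen sequence of exponents,
\[
\max\bigl\{\|2^{k_2^{(n)}} 3^{k_3^{(n)}} 5^{k_5^{(n)}} x - \alpha\|,\ \|2^{k_2^{(n)}} 3^{k_3^{(n)}} 5^{k_5^{(n)}} y - \beta\|\bigr\} \to 0.
\]
Since the liminf over all triples $(k_2,k_3,k_5) \in \mathbb{N}^3$ is bounded above by the limit along this particular sequence, the corollary follows.

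The only potential obstacle is a purely notational one: one must confirm that the target point $(\alpha,\beta) \in [0,1)^2$ in the statement matches the identification of $\mathbb{T}^2$ with the unit square used in Theorem \ref{MAIN}. Since $\|\cdot\|$ measures distance to $\mathbb{Z}$, shifting $\alpha$ or $\beta$ by an integer does not change the quantity being minimised, so this identification is harmless. Thus no real work beyond citing Theorem \ref{MAIN} is required.
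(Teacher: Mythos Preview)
Your proposal is correct and matches the paper's approach: the paper simply presents Corollary~\ref{Cor} as a ``direct consequence'' of Theorem~\ref{MAIN} without writing out a proof, and your argument is precisely the standard unpacking of that implication (density of the orbit in $\mathbb{T}^2$ rephrased via the $\max\{\|\cdot\|,\|\cdot\|\}$ metric).
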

	Our method cannot provide any quantitative estimate of the above Diophantine approximation. A related problem is a conjecture of Littlewood (see more details in \cite{EKL}) says that for $x,y$ being irrational numbers,
	\[
	\liminf_{n\to\infty} n\|n x\|\|n y\|=0.
	\]
	However, if we restrict the set of choices of $n$, then it is unlikely that the above approximation can hold. In fact, it is interesting enough to see whether it is possible to show that
	\[
	\liminf_{k_2,k_3,k_5\to\infty} f(k_2,k_3,k_5) \|2^{k_2}3^{k_3}5^{k_5} x\|\|2^{k_2}3^{k_3}5^{k_5} y\|=0.
	\]
	for a suitable function $f$ such that
	\[
	\lim_{k_2,k_3,k_5\to\infty} f(k_2,k_3,k_5)=\infty,
	\]
	for example $f(k_2,k_3,k_5)= \log(k_2k_3k_5+2).$

	\section{Notations}
	\begin{itemize}
		\item
		In this paper, for integer $d\geq 1$ we identify $[0,1]^d$ with $\mathbb{T}^d=\mathbb{R}^d/\mathbb{Z}^d$ in the standard way. Unless otherwise mentioned, given $a\in \mathbb{T}^d$ and by multiplying an integer $k$ we obtain $ka\in\mathbb{T}^d$ (the $\mod 1$ action is implicitly performed). This action can be written as a diagonal matrix $diag(k,\dots,k).$ We write this action as $T_k.$
		\item
		We will use $\Pi$ to denote projections (or coordinate components). For example, let $a=(x,y)\in\mathbb{T}^2.$ We write $\Pi_1 a=x, \Pi_2 a=y.$ Sometimes, we use $(x,y)$ with negative values of $x$ or $y$. In this case we consider $(x,y)$ to be the corresponding point $(x',y')$ on $\mathbb{T}^2$ with $(x-x',y-y')\in\mathbb{Z}^2.$ More generally, for any set $E\subset\mathbb{R}^d,$ we also consider $E$ as a subset of $\mathbb{T}^d$ in a natural way. Thus a line in $\mathbb{R}^d$ is actually a union of line segments in $[0,1]^d$. 
		\item
		For $n\geq 1, a_1,\dots,a_n\in GL_{d}(\mathbb{Z})$ we write $Orb_{a_1,\dots,a_n}(x_1,\dots,x_d)$ for the orbit
		\[
		\{T^{k_1}_{a_1}\dots T^{k_n}_{a_n}(x_1,\dots,x_d)\}_{k_1,\dots,k_n\in\mathbb{N}},
		\] 
		We say that $A\subset\mathbb{T}^d$ is $T_{a_1},\dots,T_{a_n}$ invariant if for each $i\in\{1,\dots,n\},$
		\[
		T_{a_i}(A)\subset A.
		\]
		\item
		We will use the notion of directions of lines in $\mathbb{R}^d$ or $\mathbb{T}^d.$ Directions can be viewed as elements in $S^{d-1}/\{\pm 1\}$ or $\mathbb{P}^{d-1}(\mathbb{R}).$ We prefer to have $S^{d-1}$ in mind as it has a nice geometric shape. However, sometimes we are interested in rationalities of directions. In this situation, we represent a direction as an element in $\mathbb{P}^{d-1}(\mathbb{R})$ with coordinate $(1,t_1,\dots,t_{d-1})$ (it does not matter whether we normalise the first coordinate or others, since $\mathbb{P}^{d-1}(\mathbb{R})$ is covered by such charts). We say that $t$ is irrational, if $t_1,\dots,t_{d-1}$ are linearly independent over the field of rational numbers. Otherwise, we say that $t$ is rational. In some situations, it is more convenient to talk about the $\mathbb{Q}$ dimension of $Span_{\mathbb{Q}}(t_1,\dots,t_{d-1})$. We denote it as $\dim_{\mathbb{Q}}(t).$ This number does not depend on the chart we chose. (It is coordinate free.) We say that $t$ is irrational if $\dim_{\mathbb{Q}}=d-1.$ 
		\item
		Let $l$ be a line in $R^2.$ Then we can also view it on $\mathbb{T}^2.$ If the direction of $l$ is rational, then $l$ is a union of finitely many line segments in $\mathbb{T}^2.$ We say that $l$ is homogeneous if $l$ passing through the origin, i.e. $l\subset\mathbb{R}^2$ intersects $\mathbb{Z}^2.$ We say that $l$ is rational if the direction of $l$ is rational and there is an integer $N>0$ such that $T_N (l)$ is homogeneous. In other words, a rational line is a line with rational direction containing at least one (then necessarily infinitely many) rational points.
		\item
		We will be mostly considering diagonal actions on $\mathbb{T}^d.$ Let $L\in SL_{d}(\mathbb{Z}).$ Let $a\in\mathbb{T}^d$ and $k\in\mathbb{Z}.$ Then in $\mathbb{T}^d$ we have
		\[L^{-1}T_k(La)=T_k(a).\]
		Thus we can `change coordinates' by multiplying a matrix $L\in SL_d(\mathbb{Z}).$ In the new coordinate system, $T_k$ still acts as $diag(k,\dots,k).$
	\end{itemize}

	\section{$\times 2, \times 3$ phenomena and Furstenberg's argument}\label{Fu}
	Here we discuss an important observation due to Furstenberg. Let $a_n\in (0,1),n\geq 1$ be a sequence decaying to $0.$ Let $\epsilon>0$ be a small number and let $a_n$ be such that $|a_n|<\epsilon.$ We want to consider $2^{k_2}3^{k_3} a_n$ for $k_2,k_3\in\mathbb{N}.$ Without loss of generality, we can assume that $a_n>0.$ Here we view everything in $\mathbb{R}$ rather than in $\mathbb{T}.$ Taking logarithms, we want to consider
	\[
	k_2\log 2+k_3\log 3+\log a_n.
	\]
	Since $\log 2/\log 3\notin\mathbb{Q}$ we see that the additive semigroup $S_{2,3}$ generated by $\log 2, \log 3$ is eventually dense in the sense that for each $\delta>0$ there is a number $M>0$ such that 
	\[
	S_{2,3}\cap [M,\infty)
	\]
	is $\delta$ dense. Now as $a_n\to 0$ we see that $\log a_n\to -\infty.$ This implies that $\{k_2\log 2+k_3\log 3+\log a_n\}_{k_1,k_2\geq 0}\cap [-0.5,0]$ can be arbitrarily dense by letting $n$ be sufficiently large. Taking exponentials, we see that for large $n,$ $\{2^{k_2}3^{k_3}a_n\}_{k_2,k_3\geq 0}\cap [e^{-0.5},1]$ can be arbitrarily dense. Of course, the $e^{-0.5}$ here can be taken to be any positive number smaller than $1.$
	
	Using the above argument, one can show that if $0$ is not an isolated point of a closed $\times 2,\times 3$ invariant subset $A\subset\mathbb{T},$ then $A=\mathbb{T}.$ Later on, we will use this observation many times. For convenience, we call this type of argument to be a \emph{Furstenberg argument}. We mention here that we will not only apply this argument in one-dimensional situations. For such an example, see the proof of Lemma \ref{LM1}.
	\section{Local directions}
	Here we introduce the notion of local directions.
	\begin{defn}
		Let $A\subset\mathbb{T}^2$ and $a\in \mathbb{T}^2.$ Then we construct the following set
		\[
		Dir_a(A)=\left\{t\in S^1/\{\pm 1\}\approx \mathbb{P}^1(\mathbb{R}): \exists a_n\in A\setminus \{a\}, a_n\to a, \frac{a_n-a}{|a_n-a|}\to t \right\}.
		\] 
		Then let $Dir_{\mathbb{Q}}(A)$ to be the union of $Dir_a(A)$ with rational points $a\in\mathbb{T}^2.$
	\end{defn}
	Given this definition, it is very natural to call $Dir_a(A)$ to be the local direction set of $A$ around $a$. Notice that if $a$ is not contained in the closure of $A$ then $Dir_a(A)$ is empty. The usefulness of the notion of local directions can be seen in the following result.
	\begin{lma}\label{LM1}
		Let $A\subset \mathbb{T}^2$ be a closed and $T_2, T_3$ invariant set. Let $a\in A$ be a rational point. Then for each rational $t\in Dir_a(A),$ there is a rational point $a'\in A$  and the line passing through $a'$ in direction $t$ is contained in $A.$ If $Dir_a(A)$ contains an irrational direction, then $A=\mathbb{T}^2.$
	\end{lma}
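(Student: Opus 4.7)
The plan is to deduce both claims of the lemma from a single density step based on the Furstenberg argument of Section \ref{Fu}.

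First, I would normalise. Write the denominator of $a$ as $2^\alpha 3^\beta d$ with $\gcd(d,6)=1$, and replace $a$ by $a':=T_2^\alpha T_3^\beta a$, still a rational point but now with denominator dividing $d$. Since $T_2^\alpha T_3^\beta$ acts as multiplication by the positive scalar $2^\alpha 3^\beta$ before reduction mod $\mathbb{Z}^2$, an approaching sequence $a_n\to a$ with $(a_n-a)/|a_n-a|\to t$ is pushed to $a_n':=T_2^\alpha T_3^\beta a_n\to a'$ with the same limiting direction $t$; so $t\in Der_{a'}(A)$. Because $\gcd(d,6)=1$, there exist $m_2,m_3\geq 1$ with $T_2^{m_2}a'=a'$ and $T_3^{m_3}a'=a'$, hence $T_2^{k_2 m_2}T_3^{k_3 m_3}a'=a'$ for all $k_2,k_3\in\mathbb{N}$.

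Next, the Furstenberg scaling step. Set $r_n:=|a_n'-a'|\to 0$ and $u_n:=(a_n'-a')/r_n\to t$. For $n$ large one has the lift
$$T_2^{k_2 m_2}T_3^{k_3 m_3}a_n' \;=\; a' + \bigl(2^{k_2 m_2}3^{k_3 m_3}r_n\bigr)u_n \pmod{\mathbb{Z}^2},$$
valid whenever the vector term stays in a fixed small box. Since $\log(2^{m_2})/\log(3^{m_3})=(m_2/m_3)\log 2/\log 3\notin\mathbb{Q}$, the argument of Section \ref{Fu} applies to the sub-semigroup generated by $2^{m_2}$ and $3^{m_3}$: for any $c\in(0,1)$ and $\delta>0$, for $n$ large enough the set $\{2^{k_2 m_2}3^{k_3 m_3}r_n\}_{k_2,k_3\geq 0}\cap[c,1]$ is $\delta$-dense in $[c,1]$. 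Combined with $u_n\to t$, this produces $A$-points accumulating on every $a'+st$ with $s\in[c,1]$, so by closedness $\{a'+st : s\in[c,1]\}\subset A$. Letting $c\to 0^+$ and iteratively applying $T_2^{m_2}, T_3^{m_3}$ (which scale $s$ by $2^{m_2},3^{m_3}$ while fixing $a'$) upgrades this to the entire positive half-line $\{a'+st : s\geq 0\}\subset A$.

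Finally, I distinguish the two cases. If $t$ is rational, this half-line is already a closed loop in $\mathbb{T}^2$, namely the full line through $a'$ in direction $t$, proving the first claim with the rational point $a'$. If $t$ is irrational, writing the direction as $(1,\theta)$ with $\theta\notin\mathbb{Q}$ and invoking Kronecker--Weyl equidistribution, the half-line is dense in $\mathbb{T}^2$; closedness of $A$ then forces $A=\mathbb{T}^2$. The main obstacle is the Furstenberg density step: one must check that the one-dimensional density argument from Section \ref{Fu} survives the restriction to the sub-semigroup $m_2\mathbb{N}\times m_3\mathbb{N}$ needed to keep $a'$ fixed under iteration. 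This reduces to the observation that $(m_2/m_3)\log 2/\log 3$ is still irrational, but it is the essential bridge from a one-dimensional density fact in $\mathbb{R}$ to a two-dimensional linear-direction conclusion near a rational periodic point.
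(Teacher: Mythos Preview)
Your proof is correct and shares the same core mechanism as the paper's---the Furstenberg scaling argument of Section~\ref{Fu} applied along the approaching direction $t$---but your reduction for a general rational point $a$ is cleaner than the paper's. The paper treats the fixed case $a=(0,0)$ first and then, for a general rational $a$, observes that $Orb_{2,3}(a)$ is finite; the scaled points $T_2^{k_2}T_3^{k_3}a_n$ land near \emph{various} members of this finite orbit, and a Baire category argument is invoked to find one orbit point $a'$ at which an interval of distances is realised. Your normalisation $a\mapsto a'=T_2^\alpha T_3^\beta a$ (clearing the $2$- and $3$-parts of the denominator) and subsequent restriction to the sub-semigroup generated by $2^{m_2},3^{m_3}$ that fixes $a'$ reduce the general case directly to the fixed-point situation, so the Baire step is never needed. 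The price is only the routine check that $\log(2^{m_2})/\log(3^{m_3})$ remains irrational, which you note. Both routes yield the same conclusion; yours is shorter and makes the location of the line $a'$ explicit.
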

	\begin{proof}
		The orbit of $a$ under $T_2$ and $T_3$ is finite. If $a$ would be fixed by $T_2, T_3$ (in this case, $a=(0,0)$) then the result follows by a direct application of Furstenberg argument and the fact that $A$ is closed. To see this, we fix a small number $\epsilon.$ By the definition of $Dir_{a}(A)$, we see that for each $t\in Dir_{a}(A)$ and for each $\delta>0,$ there is a point $a_\delta\in A\setminus\{a\}$ such that
		\[
		|a_\delta-a|<\delta
		\]
		and
		\[
		\left|\frac{a_\delta-a}{|a_\delta-a|}-t \right|<\delta.
		\]
		Here, we take $t$ as an element in $S^1$ (originally it was an element in $S^1/\{\pm 1\}$).  In other words, the distance between $a,a_\delta$ is at most $\delta$ and the orientation of the line segment $aa_\delta$ is $\delta$-close to $t.$ Now, we consider the line $l\subset\mathbb{T}^2$ passing through $aa_\delta.$ This line can be dense in $\mathbb{T}^2,$ but this does not leave us any difficulties. For example, we can just view this line simply on $\mathbb{R}^2.$ By considering $T_2,T_3$ actions only on this line, we are essentially in the situation discussed in Section \ref{Fu}. We identify $l$ with $\mathbb{R}$ in a metric preserving manner. Then $T_2,T_3$ actions simply become $\times 2, \times 3$ on $\mathbb{R}.$ As $a$ is fixed by $T_2,T_3$ we can take $a$ to be $0.$ Then $a_\delta$ is taken to be a small positive number $a'_\delta\leq \delta$. By Furstenberg argument, we see that the $\times 2 , \times 3$ orbit of $a'_\delta$ is $\delta'$ dense on $[e^{-0.5},1].$ Here $\delta'>0$ and $\delta'\to 0$ as $\delta\to 0.$ By letting $\delta$ to be small enough, we can ensure that $\delta'<\epsilon$ as well as $\delta<\epsilon.$ Then taking everything back to $\mathbb{T}^2,$ we see that for each $\epsilon>0,$ there is a line $l_\epsilon$ passing through $a$ with direction $t_\epsilon$ being $\epsilon$-close to $t$ such that the line segment $[a+e^{-0.5}t_\epsilon, a+t_\epsilon]\cap A$ is $\epsilon$-dense. By letting $\epsilon\to 0$ we see that
		\[
		[a+e^{-0.5}t, a+t]\subset A.
		\]
		In the above argument, $e^{-0.5}$ can be taken to be any positive number smaller than one. Thus we see that there is a line segment containing $a$ which is contained in $A.$ By applying $T_2,T_3$ we see that the ray passing through $a$ with direction $t$ must be contained in $A.$ If $t$ is rational, then this ray is a homogeneous line in $\mathbb{T}^2.$ If $t$ is irrational, this ray must be dense in $\mathbb{T}^2.$
		
		If $a$ is not fixed, then $Orb_{2,3}(a)$ is a finite set. Again, by Furstenberg argument we see that for each $s>0,$ there is a sequence of directions $t_n\in S^1,$ points $a_n\in A$, $b_n\in T_{2,3}(a)$ such that
		\[
		t_n=(a_n-b_n)/|a_n-b_n|\to t, |a_n-b_n|\to s.
		\]
		Since $Orb_{2,3}(a)$ is finite, we can consider $b_n$ to be all equal to $a'_s\in Orb_{2,3}(a).$ Applying the above argument for each $s>0$ we find an at most finite decomposition of $(0,\infty).$ The closure of at least one of components in this decomposition contains intervals (Baire Category). Together with the $T_2,T_3$ invariance and closeness of $A$, we conclude the result.
	\end{proof}
	To warm-up, we illustrate a simple observation.
	\begin{lma}\label{ONE}
		Let $(x,y)\in\mathbb{T}^2$ be such that $x,y$ are not rational nor rationally related. Then \[\#Dir_{\mathbb{Q}}(Orb_{2,3}(x,y))>1.\]
	\end{lma}
	\begin{rem}
		We will eventually show that under the same condition, the quantity $\#Dir_{\mathbb{Q}}(Orb_{2,3,5}(x,y))$ (with an additional $5$) cannot be finite (instead of only being bigger than $1$).
	\end{rem}
	\begin{proof}
		First, we show that $\#Dir_{\mathbb{Q}}(Orb_{2,3}(x,y))>0.$ Indeed, since $x,y$ are not rational, because of Furstenberg's theorem, by applying $\times 2, \times 3$ on $x$ one can approach any point in $[0,1].$ In particular, there is a sequence $a_n\in Orb_{2,3}(x,y)$ with $\Pi_1 a_n$ approaching a rational number. We assume that $y'=\lim_{n\to\infty}\Pi_2 a_n$ exists by passing to a subsequence if necessary. If $y'$ is irrational, then by Furstenberg's theorem and Baire category theory, the closure of $Orb_{2,3}(x,y)$ contains at least one vertical line with rational $X$-coordinate and $Dir_\mathbb{Q}(Orb_{2,3}(x,y))$ contains $(0,1).$  If $y'$ is rational, then by the compactness of $S^1$ we conclude that $\#Dir_\mathbb{Q}(Orb_{2,3}(x,y))>0.$ 
		
		Now suppose that $\#Dir_\mathbb{Q}(Orb_{2,3}(x,y))=1.$ Then this one element $t$ in $S^1$ must be rational, namely, $t=(t_x,t_y)$ satisfies $t_xt_y=0$ or $t_y/t_x\in\mathbb{Q}.$ Otherwise, $Orb_{2,3}(x,y)$ must be dense by Lemma \ref{LM1}. Assume that $t_x\neq 0, t_y/t_x=p/q$ with $gcd(p,q)=1.$ Then there is a matrix $L\in SL_2(\mathbb{Z})$ such that the direction of $L(t)$ is horizontal ($X$-coordinate direction). As $T_2,T_3$ commute with $L$ we claim that it is of no loss of generality if we assume that the direction $t$ is $(1,0).$ Indeed, for doing this, we also need to `shift' the starting point to be $(x',y')=(x,y)\times L^{T}.$ Rationality and rationally relation cannot be changed under this multiplication with matrix $L^{T}\in SL_2(\mathbb{Z}).$ More precisely, $x',y'$ are again irrational numbers which are not rationally related. Thus we may replace $x,y$ by $x',y'$ and assume that $(1,0)$ is the only element in $Dir_{\mathbb{Q}}(Orb_{2,3}(x',y')).$ Since $y'$ is irrational, we see that the closure of $Orb_{2,3}(0,y')$ contains $ \{0\}\times [0,1].$ Let $Q$ be a large prime and let $a_n\in Orb_{2,3}(x',y')$ with $\Pi_2 a_n\to Q^{-1}.$ We assume that $x''=\lim_{n\to\infty}\Pi_1 a_n$ exists. If $x''$ is irrational, then by using Furstenberg's theorem, we see that there is an integer $k$ not being a multiple of $Q$ such that $Orb_{2,3}(x',y')$ is dense $[0,1]\times \{k/Q\}.$ If $x''$ is rational, then $Dir_{(x'',y'')}(Orb_{2,3}(x',y'))=\{(1,0)\}.$ This time we apply Furstenberg argument, Lemma \ref{LM1} and see that there is an integer $k$ not being a multiple of $Q$ such that $Orb_{2,3}(x',y')$ is dense $[0,1]\times \{k/Q\}.$ As this happens for any prime number $Q>5$ we see that $\overline{Orb_{2,3}(x',y')}$ contains infinitely many horizontal lines with rational $Y$-coordinates. Those $Y$-coordinates must have at least one accumulate point. If this accumulate point is irrational, then we can apply Furstenberg's theorem. If this accumulate point is rational, then we can apply Furstenberg argument (restricted on the line $\{0\}\times [0,1]$). In both cases, we see that $\overline{Orb_{2,3}(x',y')}=\mathbb{T}^2.$ However, this certainly contradicts our assumption that
		\[
		\#Dir_{\mathbb{Q}}(Orb_{2,3}(x,y))=1.
		\]
		From here the result follows.
	\end{proof}
	Our strategy for proving Theorem \ref{MAIN} is to show that there are many directions in $Dir_\mathbb{Q}(A)$ for $A$ being a suitable $T_2,T_3,T_5$ invariant set. In order to see why this strategy is plausible, we prove the following simple observation.
	\begin{lma}\label{manydirections}
		Let $A\subset \mathbb{T}^2$ be a closed and $T_2, T_3$ invariant set. If $\#Dir_\mathbb{Q}(A)=\infty,$ then $A=\mathbb{T}^2.$
	\end{lma}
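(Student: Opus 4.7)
The plan is to combine Lemma \ref{LM1} with a compactness-and-accumulation argument applied to the rational lines that Lemma \ref{LM1} produces inside $A$. First, if $Der_{\mathbb{Q}}(A)$ contains any irrational direction, Lemma \ref{LM1} immediately yields $A=\mathbb{T}^2$, so I may assume every direction in $Der_{\mathbb{Q}}(A)$ is rational. For each such $t$, Lemma \ref{LM1} provides a rational point $a'_t\in A$ and an entire rational line $l_t=a'_t+\mathbb{R}t\subset A$. Since $\#Der_{\mathbb{Q}}(A)=\infty$, this gives infinitely many lines $l_i\subset A$ with pairwise distinct rational directions $t_i$ and rational base points $a'_i$.

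Next I would extract a limit. By compactness of $S^1/\{\pm 1\}$ and of $\mathbb{T}^2$, after passing to a subsequence one has $t_i\to t^*$ and $a'_i\to a^*\in A$. For every $s\in\mathbb{R}$, the point $a'_i+s t_i\in l_i\subset A$ converges to $a^*+s t^*$, so closedness of $A$ gives the entire limit line $l^*:=a^*+\mathbb{R}t^*\subset A$. If $t^*$ is irrational then $l^*$ is already dense in $\mathbb{T}^2$, so $A=\mathbb{T}^2$ and the lemma is proved.

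The remaining, essential case is $t^*$ rational. I would then apply an $SL_2(\mathbb{Z})$ change of coordinates (which commutes with each $T_k$ and preserves $A$) to arrange $t^*=(1,0)$. Writing each $t_i$ as the direction of a primitive integer vector $(p_i,q_i)$ with $\gcd(p_i,q_i)=1$, the convergence $t_i\to(1,0)$ forces $q_i/p_i\to 0$, and since the $(p_i,q_i)$ are pairwise distinct they cannot all be bounded, so after extraction $|p_i|\to\infty$. A short lattice computation then shows that for every $x_0\in\mathbb{T}$, the intersection of $l_i$ with the vertical circle $V_{x_0}=\{x_0\}\times\mathbb{T}$ consists of exactly $|p_i|$ points, evenly spaced at gap $1/|p_i|$. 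All these points lie in $A$, so as $|p_i|\to\infty$ their union is dense in $V_{x_0}$, forcing $V_{x_0}\subset A$ by closedness. Since $x_0$ was arbitrary, $A=\mathbb{T}^2$.

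The main obstacle is precisely this rational case: the single limit line $l^*$ does not by itself give anything more than what Lemma \ref{LM1} already produced, and one has to exploit the quantitative fact that distinct primitive rational directions accumulating at a rational limit must have blowing-up coordinates, so that the finite intersections of the $l_i$ with a single transverse circle form nets of arbitrarily fine mesh. Once that observation is in hand, the two cases close the lemma.
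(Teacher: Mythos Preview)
Your proof is correct, and at its core uses the same mechanism as the paper: once Lemma \ref{LM1} hands you infinitely many rational lines in $A$ with pairwise distinct primitive directions, the primitive vectors must be unbounded, so the lines become arbitrarily dense in $\mathbb{T}^2$ and closedness forces $A=\mathbb{T}^2$.

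The paper reaches this conclusion more directly, without extracting a limit direction or splitting into cases. It simply observes that a line with primitive direction $(q,p)$ is $1/q$-dense in $\mathbb{T}^2$ (your own vertical-circle computation), so if $A\neq\mathbb{T}^2$ there is a uniform bound on $q$ (and symmetrically on $p$), leaving only finitely many possible directions --- contradiction. Your compactness step and the dichotomy on whether $t^*$ is irrational are thus unnecessary scaffolding: the irrational-limit case never needs to be treated separately, because the density argument already handles everything uniformly. What your route buys is a slightly more geometric picture (accumulation of lines onto a limit line), but the paper's argument is shorter and avoids the $SL_2(\mathbb{Z})$ normalisation entirely.
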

	\begin{proof}
		Since  $\#Dir_\mathbb{Q}(A)=\infty,$ we can find at least countably infinitely many elements in $Dir_\mathbb{Q}(A).$ For each $t\in Dir_\mathbb{Q}(A)$, by Lemma \ref{LM1}, we see that $A$ contains a line in direction $t.$ Without loss of generality we assume that all such $t$ must be rational directions. Let $t=(t_x,t_y)$ is such that $t_y/t_x=p/q, gcd(p,q)=1.$ Then a line with direction $t$ is $1/q$-dense in $\mathbb{T}^2.$ Thus if $A$ is not $\mathbb{T}^2,$ there are only finitely many such $q's.$ Therefore we can only find finitely many rational directions in $Dir_\mathbb{Q}(A).$ This contradiction gives us the result.
	\end{proof}
	
	\section{Pre-image tracking method and the final step}\label{track}
	So far, we have only considered $Orb_{2,3}(.)$ instead of $Orb_{2,3,5}(.).$ We shall see in this section that the addition $\times 5$ action plays a crucial role in our argument. However, we believe that the need for the additional $\times 5$ action should not be necessary, see Remark \ref{23}. In order to see clearly our reliance on the additional $\times 5$, we prove the following lemma.
	\begin{lma}\label{235}
		Let $N>1$ be an integer and let $r$ be a rational number not in $\mathbb{Z}N^{-1}.$ Then there are choices of two numbers $a,b$ in $\{2,3,5\}$ such that $a^{m}b^{n}r$ is never in $\mathbb{Z}N^{-1}$ whenever $n,m\geq 0.$ In particular, if $r\in \mathbb{T}$ is a non-zero rational number, then there exist two numbers $a,b$ in $\{2,3,5\}$ such that $a^mb^nr$ is never zero in $\mathbb{T}$, in other words, the number $a^mb^nr$ is never an integer.
	\end{lma}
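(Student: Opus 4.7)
My approach is to translate the divisibility statement into a statement about $p$-adic valuations and use a simple pigeonhole on prime factors.

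Write $r = p/q$ in lowest terms, so $\gcd(p,q) = 1$. The condition $a^{m}b^{n} r \in \mathbb{Z}N^{-1}$ is equivalent to $a^{m}b^{n} p / q = k/N$ for some integer $k$, i.e.\ $q \mid N a^{m} b^{n} p$, and since $\gcd(p,q)=1$, this becomes $q \mid N a^{m} b^{n}$. So the task reduces to choosing $a,b\in\{2,3,5\}$ so that $q \nmid N a^{m} b^{n}$ for all $m,n \geq 0$.

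The hypothesis $r \notin \mathbb{Z}N^{-1}$ rewrites as $q \nmid N$, which means there is at least one prime $\ell$ with $v_{\ell}(q) > v_{\ell}(N)$, where $v_{\ell}$ denotes the $\ell$-adic valuation. I would fix such an $\ell$ and then choose $a,b$ as follows: if $\ell \notin \{2,3,5\}$, let $\{a,b\}$ be any two elements of $\{2,3,5\}$; if $\ell \in \{2,3,5\}$, take $\{a,b\} = \{2,3,5\} \setminus \{\ell\}$. In either case $v_{\ell}(a^{m} b^{n}) = 0$ for all $m,n\geq 0$, hence
\[
v_{\ell}(N a^{m} b^{n}) = v_{\ell}(N) < v_{\ell}(q),
\]
so $q \nmid N a^{m} b^{n}$, which is exactly what we needed.

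For the ``in particular'' statement, one applies the same reasoning with $N = 1$: if $r \in \mathbb{T}$ is non-zero rational, then in lowest terms $r = p/q$ with $q \geq 2$, so $q$ has some prime divisor $\ell$, and the same choice of $a,b$ as above gives $q \nmid a^{m} b^{n}$, i.e.\ $a^{m}b^{n} r$ is never an integer. There is essentially no obstacle here; the only thing to be careful about is making sure that the reduction to ``$q \mid N a^{m} b^{n}$'' is done cleanly, because one must use $\gcd(p,q)=1$ to strip off the factor of $p$.
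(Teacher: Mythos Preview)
Your proof is correct and follows essentially the same approach as the paper's: both reduce the question to the divisibility condition $q \mid N a^{m} b^{n}$, locate a prime obstruction to $q \mid N$, and then choose $\{a,b\}\subset\{2,3,5\}$ to avoid that prime. The paper packages the obstruction as the quantity $[q,N]/N$ and case-splits on its $2,3,5$-factorisation rather than using $\ell$-adic valuations directly, but this is the same argument in slightly different notation.
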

	\begin{proof}
		Let $r=p/q$ with $gcd(p,q)=1.$ Suppose that there is an integer $K$ such that $Kr\in \mathbb{Z}N^{-1}$ which is equivalent to $KNr\in\mathbb{Z}.$ Then we see that
		\[
		q| KNp.
		\]
		Since $gcd(p,q)=1$ we see that
		\[
		q| KN.
		\]
		This forces $[q,N]|KN.$ Thus $K$ must be an integer multiple of $[q,N]/N.$ Now we write
		\[
		[q,N]/N=2^{k_2}3^{k_3}5^{k_5}Q
		\]
		with $k_2,k_3,k_5\geq 0, Q\geq 1, gcd(2,Q)=gcd(3,Q)=gcd(5,Q)=1.$ If $k_2>0$ then we can choose $a=3,b=5.$ If $k_3>0$, then we can choose $a=2,b=5.$ If $k_5>0$, then we can choose $a=2,b=3.$ In all these three cases, we see that
		\[
		a^{m}b^nr
		\]
		can never be in $\mathbb{Z}N^{-1}$ as $a^{m}b^{n}$ can never be an integer multiple of $[q,N]/N.$ Suppose now that $k_2=k_3=k_5=0, Q>1.$ Then we can choose $a=2,b=3.$ (In fact, any choices of $a,b$ in this case would be equally possible.) Thus we are left with the case $k_2=k_3=k_5=0, Q=1.$ In this case we see that $[q,N]=N$ and this implies that $q| N$ which is not possible because $r=p/q\notin \mathbb{Z}N^{-1}.$
	\end{proof}
	
	In what follows, we need a special construction of a set. Let $L_0$ be the unit segment connecting $(0,0)$ and $(0,1).$ Let
	\[
	a_0=(0,0), a_1=(0,N^{-1}),\dots, a_{N}=(0,1).
	\]
	Then we have the line segments $a_{i}a_{i+1}, i\in\{0,\dots,N-1\}.$ Let $0<\delta<0.0001$ be a small number. We construct a rhombus $E_0$ with vertices $(0,0)$, $(0,1)$, $(-\delta,0.5)$, $(\delta,0.5).$ Then we shrink $E_0$ with factor $N^{-1}$ and copy it along all segments $a_{i}a_{i+1}, i\in\{0,\dots,N-1\}.$ As a result we obtain a set $E_{\delta,N}$ which is a union of small rhombuses, see figure \ref{fig:figure 1}. One property we need for $E_{\delta,N}$ is as follows.
	
	\begin{itemize}
		\item{Pre-image property(PP):} If $a\notin E_{\delta,N}$ and $|\Pi_1 a|\leq \delta$ then $T_2(a)$, $T_3(a)$, $T_5(a)$ are not in $E_{\delta,N}.$
	\end{itemize}
	We claim that for each $\delta<0.0001$ and $N\geq 1$, the set $E_{\delta,N}$ has (PP). To see this,  let $a$ be a point as in the condition (PP). Then there is a point $a_i,i\in\{0,\dots,N-1\}$ such that the line segment $a_ia$ intersects $E_{\delta,N}$ only at $a_i.$ Consider the line segment $S_i$ with the centre point being $a_i$,  being parallel to $a_ia$, and whose length is $10$ times that of $a_ia$.  Since $\delta<0.0001,$ $S_i$ still intersects $E_{\delta,N}$ at $a_i$ only. Now consider the union $S=\cup_{k\in\{0,\dots,N-1\}} (S_i+k/N).$ We see that $T_2(a),T_3(a),T_5(a)$ are contained in $S.$ Since $S\cap E_{\delta,N}=\{a_0,\dots,a_N\}$ the claim follows.
	
	\begin{figure}[h]
		\includegraphics[width=0.3\linewidth, height=6cm]{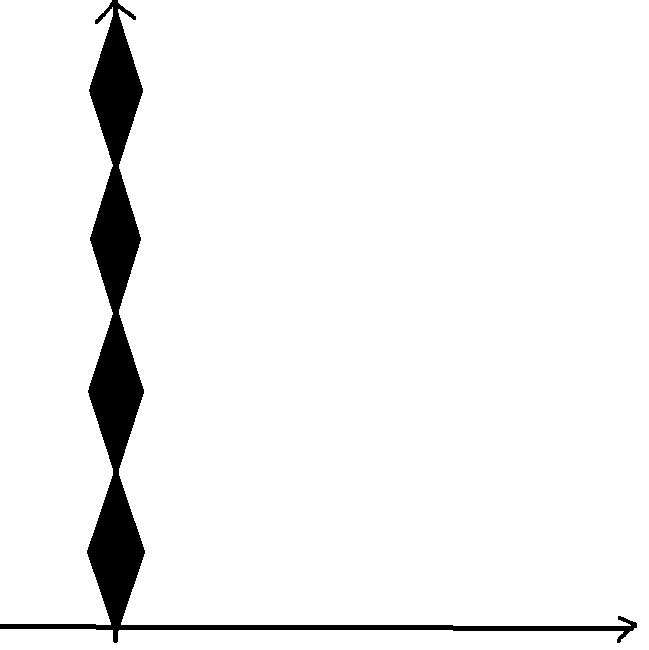} 
		\caption{Illustration of  $E_{\delta,N}$ (filled with black)}
		\label{fig:figure 1}
	\end{figure}

	For convenience, we introduce the notion of a linear form system.
	\begin{defn}
		Let $a,b\in\mathbb{Z}$ be such that at least one of them is not zero and $gcd(a,b)=1.$ Let $L_{a,b}(x,y)=ax+by$ be the corresponding linear form and $L_{a,b}$ be the line set $\{(x,y)\in\mathbb{T}^2: ax+by=0 \}.$ Let $J>0$ be an integer, let $L_0,\dots,L_J$ be different lines with $L_0=\{0\}\times [0,1].$ We call such a collection of lines a \emph{linear form system}. 
	\end{defn}
	The following result is a very important step towards the proof of Theorem \ref{MAIN}. Essentially it says that for a closed $T_2,T_3,T_5$ invariant set $A$, if $A\cap \mathbb{Q}^2$ is contained in a linear form system, then $A$ itself must be rather special.
	\begin{lma}(Pre-image tracking)\label{MAINLEMMA}
		Let $L_j,j\in \{0,\dots,J\}$ be a linear form system. Let $(x,y)\in\mathbb{T}^2$ be such that $\overline{\{Orb_{2,3,5}(x,y)\}}\cap \mathbb{Q}^2 \subset \bigcup_{0\leq j\leq  J}L_j.$ Suppose that $(0,1)\in Dir_{\mathbb{Q}}(Orb_{2,3,5}(x,y))$ then $x$ is rational.
	\end{lma}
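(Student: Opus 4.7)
Write $A = \overline{Orb_{2,3,5}(x,y)}$. Applied to $A$ viewed as a $(T_2, T_3)$-invariant set, Lemma \ref{LM1} turns the hypothesis $(0,1) \in Der_\mathbb{Q}(A)$ into a vertical line through a rational point contained in $A$. Infinitely many rational points of this line lie in $A \cap \mathbb{Q}^2 \subset \bigcup_j L_j$, and only $L_0 = \{x = 0\}$ among the lines $L_{a_j, b_j}$ is vertical (from $b_j = 0$, $|a_j| = 1$); hence $\{0\} \times \mathbb{T} \subset A$.

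Let $a_n \in Orb_{2,3,5}(x,y)$ realize the vertical direction, $a_n \to a = (\alpha, \beta)$ rational. I would apply Lemma \ref{LM1} to $A$ as a $(T_2, T_3)$-, a $(T_2, T_5)$-, and a $(T_3, T_5)$-invariant set (each log-ratio is irrational, so the Furstenberg argument still applies). Each application produces an orbit point of $a$ under the corresponding pair that lies on $\{0\} \times \mathbb{T}$, forcing the denominator of $\alpha$ to divide some $2^{a_1} 3^{a_2}$, some $2^{a_3} 5^{a_4}$, and some $3^{a_5} 5^{a_6}$; the only positive integer common to these three sets is $1$. Hence $\alpha = 0$ and $a = (0, \beta) \in L_0$.

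Now assume for contradiction $x$ is irrational. Fix any ordering of the undoings in the orbit traceback, and define $a_n^{(0)} = a_n, a_n^{(1)}, \ldots, a_n^{(K_n)} = (x,y)$, where each $a_n^{(J)}$ is obtained from $a_n^{(J-1)}$ by undoing one of $T_2, T_3, T_5$ (whichever is available) and $K_n = k_2(n) + k_3(n) + k_5(n)$. The core claim, proved by induction on $J$, is that $\Pi_1 a_n^{(J)} \to 0$ as $n \to \infty$ for each fixed $J$. Indeed, any subsequence limit $b^*$ of $a_n^{(J)}$ is a $T_{c_J}$-preimage of some subsequence limit of $a_n^{(J-1)}$, which by induction lies on $\{0\} \times \mathbb{T}$; each branch of $T_{c_J}^{-1}$ is a uniform contraction, so the vertical direction is preserved and $(0,1) \in Der_{b^*}(A)$; then the argument of the previous paragraph, applied at $b^*$ with the pair of actions not containing $T_{c_J}$, forces $\Pi_1 b^* = 0$ (this uses $\gcd(c_J, c') = \gcd(c_J, c'') = 1$ for the complementary actions $c', c''$).

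Finally, combine the claim with the pre-image property (PP) of $E_{\delta, N}$ and Lemma \ref{235}. For $\delta < 10^{-4}$ and $N$ large, $a_n \in E_{\delta, N}$ for $n$ large. Propagating PP backwards along the chain, each $a_n^{(J)}$ remains in $E_{\delta, N}$ provided $|\Pi_1 a_n^{(J)}| \leq \delta$, which holds for $n$ sufficiently large by the claim. If $K_n$ is bounded on a subsequence, then $(0, \beta) \in Orb_{2,3,5}(x,y)$ and $x$ is rational directly; otherwise the iteration places $(x,y) \in E_{\delta, N}$ for arbitrarily large $N$, and since $\bigcap_N E_{\delta, N} = \{0\} \times \mathbb{T}$, we conclude $x = 0$, contradicting irrationality. \textbf{The main obstacle} is executing this propagation rigorously when $K_n \to \infty$: the claim only gives pointwise convergence in $J$ with no uniform rate. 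Lemma \ref{235} is the key tool, allowing a careful choice of the undoing sequence $c_1, c_2, \ldots$ (using only two of the three actions at each rational limit encountered) that keeps $\Pi_1$ small throughout the chain.
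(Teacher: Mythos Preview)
Your overall plan follows the paper's pre-image tracking strategy, and your opening steps (obtaining $L_0=\{0\}\times\mathbb{T}\subset A$ and forcing the base rational limit to lie on $L_0$) are essentially correct and close to what the paper does. The genuine gap is exactly the one you flag: your core claim only gives $\Pi_1 a_n^{(J)}\to 0$ for each \emph{fixed} $J$, which says nothing about $J=K_n$ when $K_n\to\infty$. Your proposed fix via Lemma~\ref{235} does not address this, because the exponents in each $a_n$ are already determined; you may reorder the undoings, but reordering cannot manufacture the missing uniformity. The paper closes this gap by proving a \emph{uniform} statement: there exists a single $\epsilon>0$ such that \emph{every} orbit point $a\in E_{\delta,N}\cap Orb_{2,3,5}(x,y)$ with $d(a,L_0)<\epsilon$ satisfies $d(pre(a),L_0)\le\tfrac12\, d(a,L_0)$ (hence, by PP, $pre(a)\in E_{\delta,N}$ as well). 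This is obtained by contradiction: a failing sequence would have pre-images accumulating at some $(t,y''')$ with $t\in\{1/2,1/3,2/3,1/5,\dots,4/5\}$, and each possible limit is then ruled out using Lemma~\ref{235}, Lemma~\ref{LM1}, and the rhombus geometry of $E_{\delta,N}$. With the uniform $\epsilon$ in hand, one deep orbit point traces back all $M$ steps to $(x,y)$ while staying in the regime, giving $|x|\le 2^{-M}\epsilon$ and hence $x=0$.

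There is a second issue in your final paragraph: $N$ is not a free parameter to be sent to infinity. In the paper $N$ is fixed once and for all by the condition that $(0,y')\in\bigcup_{j>0}L_j$ implies $Ny'\in\mathbb{Z}$. This specific choice is what makes $E_{\delta,N}$ do real work: for $\delta$ small, a point of $E_{\delta,N}$ near a vertex $(0,m/N)$ cannot approach it along the direction of any $L_j$, $j>0$, which is precisely the geometric fact needed to rule out the case where $pre(a_{n_k})$ limits onto $\bigcup_{j>0}L_j$ in the contradiction argument above. Your intersection $\bigcap_N E_{\delta,N}$ plays no role; the conclusion $x=0$ comes from the exponential contraction $2^{-M}$ along a single chain, not from shrinking $E$.
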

	\begin{rem}
		The last condition is not restrictive.  In fact, as long as the direction set $Dir_\mathbb{Q}(Orb_{2,3,5}(x,y))$ contains a rational direction, by changing the coordinates, we can assume, without the loss of generality,  that this direction is $(0,1)$. This assumption makes the $X$-coordinate special as we will see in the proof.
		
		Later on, we will apply this lemma in the case when $x,y$ are not rational nor rationally related. In this case, from Lemma \ref{ONE}, we see that $Dir_{\mathbb{Q}}(Orb_{2,3,5}(x,y))$ contains at least one element. If this element is not rational, then $Orb_{2,3,5}(x,y)$ is dense and this implies that $Dir_{\mathbb{Q}}(Orb_{2,3,5}(x,y))$ contains all the directions (in particular, a rational direction).
	\end{rem}
	\begin{proof}
		Assume that $x$ is irrational. Let $S\subset\mathbb{N}$ be the multiplicative semigroup generated by $2,3,5$ and for each natural number $n$, $S_n$ be the $n$-th element in $S.$ For convenience let $a_0=(x,y)$, $a_n=(S_n x, S_n y)\in\mathbb{T}^2$ and $A=\overline{\{Orb_{2,3,5}(x,y)\}}.$ Our assumption is that for each subsequence $\{a_{n_k}\}_{k\geq 1},$ whenever \[\lim_{k\to\infty}\Pi_1 a_{n_k}\in \mathbb{Q}\setminus \{0\},\] we have (here $\lim$ denotes the set of limit points) \[\lim_{k\to\infty} a_{n_k}\subset\bigcup_{J\geq j>0} L_j.\] 
		This is because if $\lim_{k\to\infty} \Pi_2 a_{n_k}$ contains an irrational number, then by choosing carefully two actions in $\{T_2,T_3,T_5\}$ (Lemma \ref{235}) and applying Furstenberg's theorem we can find a vertical line with rational $X$-coordinate not passing through the origin. This line certainly contains rational points not on $\bigcup_{j\in J} L_j.$ From here we see that $\{0\}\times [0,1]$ is the only vertical line with rational $X$ coordinate that could be contained in $A.$ Since $(0,1)\in Dir_\mathbb{Q}(A),$ by Lemma \ref{LM1}, we see that $A$ contains a vertical line with a rational $X$-component.   We conclude that $\{0\}\times [0,1]$ is the only vertical line with rational coordinate that is contained in $A.$
		
		Let $a\in Orb_{2,3,5}(x,y)$ then there are uniquely determined (recall that $x$ is irrational by assumption) non-negative integers $k_2,k_3,k_5$ with
		\[
		a=T^{k_2}_2T^{k_3}_3T^{k_5}_5(x,y).
		\]
		We need to determine a `pre-image' of $a$. Since at least one of $k_2,k_3,k_5$ is strictly positive, say $k_2>0$, we can choose $pre(a)$ to be
		\[
		T^{k_2-1}_2T^{k_3}_3T^{k_5}_5(x,y).
		\]
		If multiple choices are possible, we have a free choice. For concreteness, we prefer to minus one on $k_2$ than on $k_3$ than on $k_5.$ In this way, we defined $pre(a)$ for all $a\in Orb_{2,3,5}(x,y)$ expect for $(x,y).$ We will not need to apply $pre$ on $(x,y)$ so we leave the function $pre$ as it is without trying to extend the domain.

		Let $N$ be an integer such that $(0,y')\in \bigcup_{j>0} L_j\implies y'N\in\mathbb{Z}.$ We cut the line $L_0=\{0\}\times [0,1]$ into equal pieces with length $N^{-1}$. The cut points are rational numbers of form $m/N, m\in\{0,\dots,N\}.$ We choose a small number $\delta>0$ and construct $E_{\delta,N}$ as mentioned before this lemma. For simplicity, we call $E=E_{\delta,N}.$ 
		
		Now we want to consider sequences $a_{n_k},k\geq 1$ with $a_{n_k}\in E$ and \[\lim_{k\to\infty} {\Pi_1 a_{n_k}}=0.\] We need to be sure that there is at least one such a sequence. Assume that there is none then we see that $A\cap L_0\subset \{m/N\}_{m\in\{0,\dots,N\}}.$ This contradicts the fact that $L_0\subset A.$ Having shown the existence, we take an arbitrary such sequence $\{a_{n_k}\}_{k\geq 1}.$ Consider the pre-images
		\[
		\{pre(a_{n_k})\}_{k\geq 1}.
		\]
		For each $k$, we see that there is an action $T$ in $\{T_2,T_3,T_5\}$ such that
		\[
		T(pre(a_{n_k}))=a_{n_k}.
		\]
		Now  $pre(a_{n_k})$ can be still near the line $L_0$ (in $\mathbb{T}^2$) in which case $\Pi_1 pre(a_{n_k})=(\Pi_1 a_{n_k})/s$ or $1-(\Pi_1 a_{n_k})/s$ for a number $s\in \{2,3,5\}$. Otherwise, $pre(a_{n_k})$ is far away from $L_0.$ More precisely, we see that for a number $s\in \{2,3,5\}$ 
		\[
		s(\Pi_1 pre(a_{n_k}))=\Pi_1 a_{n_k}.
		\]
		If $s=2$ then $\Pi_1 pre(a_{n_k})$ would be close to $1/2.$ (Similar argument works for $s=3,5$ as well.) Suppose that we can find infinitely many $k$ such that $pre(a_{n_k})$ is away from $L_0.$ By passing to a subsequence we see that there is a 
		\[
		t\in \{1/5,2/5,3/5,4/5,1/3,2/3,1/2\}
		\]
		and
		\[
		\lim_{k\to\infty} \Pi_1 pre(a_{n_k})=t.
		\]
		By passing to a further subsequence if necessary, we assume that the limits
		\[
		\lim_{k\to\infty} \Pi_2 a_{n_k}=y'',\lim_{k\to\infty} \Pi_2 pre(a_{n_k})=y'''
		\]
		exist. 
		
		If $y''$ is irrational, then $y'''$ is irrational as well. Then we see that $(t,y''')\in A.$ The denominator of $t$ is in $\{2,3,5\}.$ By Lemma \ref{235}, we can choose two numbers $a,b$ out of $\{2,3,5\}$ and by applying Furstenberg's theorem for actions $T_a,T_b$ we see that there is a $r\neq 0$ and $\{r\}\times [0,1]$ is contained in $A.$ This contradicts the assumption. 
		
		Now, suppose that $y''$ is rational. Then $y'''$ is rational as well. If $(t,y''')\notin \bigcup_{j>0} L_j$ then we would have a contradiction. If $(t,y''')\in \bigcup_{j>0} L_j,$ then $y''$ must be of form $m/N, m\in\{0,\dots,N\}.$ This is because $T_2,T_3,T_5$ preserve $L_j,j>0.$ If we choose $\delta$ to be small enough, then by the construction of the set $E$ we see that $(0,y'')$ cannot be approached by the sequence $a_{n_k}$ within directions determined by $L_j,j>0.$ Since $T_2,T_3,T_5$ preserve directions, we see that $(t,y''')$ cannot be approached by $pre(a_{n_k})$ within directions determined by $L_j,j>0.$ 
		
		Let $v$ be a direction of which $(t,y''')$ is approached, namely, \[v\in Dir_{(t,y''')}(Orb_{2,3,5}(x,y)).\] We assume that $v$ is rational. Otherwise, we have $A=\mathbb{T}^2.$ Using Lemma \ref{235}, we can choose a suitable set of two actions in $\{T_2,T_3,T_5\}$ depending on the denominator of $t$ and we see that there is a rational point $(t',y'''')\in A$ with $t'\neq 0$ such that the line passing through $(t',y'''')$ with direction $v$ is contained in $A.$ (Here it is important that $t'\neq 0$ as $v$ can be $(0,1),$ the vertical direction. In this case the following argument would fail. This is where the additional $\times 5$ action plays a crucial role.) We know that $v$ is different than any of the directions of $L_j,j>0.$ Then we see that $A$ must contain rational points outside the $Y$-axis which are not on $\bigcup_{j>0} L_j.$ This contradicts the assumption.
		
		Denote $d(.,.)$ to be the Euclidean metric of $\mathbb{T}^2.$	The above contradiction shows that all but only finitely many $pre(a_{n_k})$ can be away from $L_0.$ As this holds for any sequence $a_{n_k}$ approaching $L_0$ within $E$, we see that there is a small number $\epsilon>0$ such that whenever $a\in E\cap Orb_{2,3,5}(x,y), d(a,L_0)<\epsilon$, the pre-image $a'$ of $a$ (as $a$ is in the orbit, this pre-image is uniquely determined in our construction) is closer to $L_0.$ More precisely, we have
		\[
		d(a',L_0)\leq 0.5d(a,L_0).
		\] 
		By the \emph{pre-image property (PP)}, we see that $a'\in E$ as well (recall that we chose $d(a,L_0)$ to be very small). Thus we can track the pre-image of $a'$, then the pre-image of the pre-image... After a certain number $M$ of tracking back steps, we must arrive at $(x,y)$, the starting point. We see that $d((x,y),L_0)\leq 2^{-M} \epsilon.$ Since $M$ can be arbitrarily large and $\epsilon$ can be arbitrarily small, we see that $x=0$ (or $1$, which is the same because it is in $\mathbb{T}$). This contradiction shows that the assumption that $x$ is irrational cannot hold. Thus we see that $x$ is rational.
	\end{proof}
	
	We will now finish the main step of the proof. In order to give a clear illustration, we prove the following lemma which will be upgraded into a full proof of Theorem \ref{MAIN}. This lemma is only a tiny improvement of Lemma \ref{ONE}, but the proof is much more complicated.
	\begin{lma}\label{ATLEASTTWO}
		Let $(x,y)\in\mathbb{T}^2$ be such that $x,y$ are not rational nor rationally related. Then \[\#Dir_{\mathbb{Q}}(Orb_{2,3,5}(x,y))>2.\]
	\end{lma}
	\begin{rem}
		We can replace the number $2$ on the RHS with any positive integer. This will require some technical considerations which will be discussed later. 
	\end{rem}
	\begin{proof}
		Assume that
		\[
		\#Dir_{\mathbb{Q}}(Orb_{2,3,5}(x,y))=2,
		\]
		then let $\{u,v\}=Dir_{\mathbb{Q}}(Orb_{2,3,5}(x,y)).$ Both $u,v$ can be assumed to be rational direction. By applying a matrix $L$ in $SL_2(\mathbb{Z})$ (if necessary) we shall assume that
		\[
		u= (0,1), v\neq (0,1).
		\]
		By doing this we shifted our starting point $(x,y)$ to $(x',y')$ obtained by $(x,y)L^{T}.$ If the orbit closure $A=\overline{Orb_{2,3,5}(x',y')}$ contains infinitely many $u$ directional lines then $A$ must be $\mathbb{T}^2.$ In addition, all the $u$ directional lines contained in the orbit closure must be rational lines. This can be seen via the $X$-coordinates of those lines after applying Furstenberg's theorem. The same holds for $v$-directional lines as well. Therefore we assume that $A$ contains finitely many $u$ and $v$ directional lines which are necessarily rational. Denote the collection of those lines as $\mathcal{L}.$ Let $N$ be an integer such that $T_N$ maps lines in $\mathcal{L}$ to homogeneous lines (passing through $(0,0)$). 	There are only two such homogeneous lines $l_u,l_v$ with directions $u,v$ respectively. This is possible because $\mathcal{L}$ is a finite collection of rational lines. More precisely, let $N'$ be such that the lines in $\mathcal{L}$ with $v$ direction intersect the $Y$-axis with coordinate in $N^{-1}\mathbb{Z}.$ This is possible since $v\neq (0,1)$ and there are finitely many lines in $\mathcal{L}$ (therefore there are finitely many intersections of lines in $\mathcal{L}$ with the $Y$-axis). Similar argument holds for $u$ directional lines in $\mathcal{L}$ with an integer $N''.$ We can now choose $N=N'N''.$ We enlarge $\mathcal{L}$ into $\tilde{\mathcal{L}}$ by including all pre-images of $l_u,l_v$ under $T_N.$
		
		Let $r$ be a rational number not of form $m/N,m\in\{0,\dots,N\}.$ We have $\{r\}\times [0,1]\cap A\neq\emptyset.$  If $(r,s)\in A$ with $s$ being irrational, then by choosing two of $\{T_2,T_3,T_5\}$ carefully as indicated in Lemma \ref{235}, we see that there is a non-zero rational number $r'$ not of form $m/N, m\in\{0,\dots,N\}$ such that $\{r'\}\times [0,1]\subset A$. This contradicts our assumption on $N.$ Thus $s$ must be rational. Then $(r,s)$ must be non-isolated. Consider $Dir_{(r,s)}(A),$ we see that 
		\[
		Dir_{(r,s)}(A)=\{v\}.
		\]
		For otherwise we could obtain an additional $u$ directional line that contradicts our assumption on $N.$ Similarly, with the help of Lemma \ref{235} again, we see that $(r,s)$ is contained in $v$ directional lines in $\tilde{\mathcal{L}}.$ Indeed, suppose that the homogeneous $v$ directional line satisfies $\{k_1 x+y=0\}$ where $k_1$ is rational. Then the $v$ directional lines in $\tilde{\mathcal{L}}$ satisfy
		\[
		\{k_1 x+y=k/N\}, k\in\{0,\dots,N-1\}.
		\] 
		If $(r,s)$ is not contained in those lines, we see that
		\[
		k_1 r+s\notin \mathbb{Z}N^{-1}.
		\]
		Thus by Lemma \ref{235}, we can choose two numbers $a,b$ out of $\{2,3,5\}$ such that the $T_a,T_b$ orbit of $(r,s)$ never intersects  the $v$ directional lines in $\tilde{\mathcal{L}}.$ Then from the proof of Lemma \ref{LM1} we can find a $v$ directional line contained in $A$ which is not one of those lines in $\tilde{\mathcal{L}}.$ This contradicts the construction of $\tilde{\mathcal{L}}.$
		
		To summarise, if $(r,s)\in A$ is rational, then it is contained in lines in $\tilde{\mathcal{L}}.$ Now we replace $x'$ by $x''=Nx'$, $y'$ by $y''=Ny'$ and consider $A''=\overline{Orb_{2,3,5}(x'',y'')}.$ It  can be checked that $Dir_\mathbb{Q}(A'')=\{u,v\}.$ We see that $A''\cap \mathbb{Q}^2$ is rather restrictive. More precisely, if $(r,s)\in A''$ is a rational point, then $A$ contains one of its pre-image under $T_N.$ This pre-image must be contained in lines in $\tilde{\mathcal{L}}$ and this implies that $(r,s)$ must be contained in $l_u$ or $l_v.$ By Lemma \ref{MAINLEMMA} we see that $x''\in\mathbb{Q}.$ However, we have $x''=Nx'$ and $x'$ is of form $ax+by$ with $a,b\in\mathbb{Z}.$ This shows that $x,y$ are rationally related. This contradicts the assumption and the result follows.
		
	\end{proof}
	
	We now finish the last step of the proof.
	
	\begin{lma}\label{manydirections2}
		Let $(x,y)\in\mathbb{T}^2$ be such that $x,y$ are not rational nor rationally related. Then \[\#Dir_{\mathbb{Q}}(Orb_{2,3,5}(x,y))=\infty.\]
	\end{lma}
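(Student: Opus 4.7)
The plan is to argue by contradiction and extend the strategy of Lemma~\ref{ATLEASTTWO} from two to $k$ directions. Assume $\#Der_{\mathbb{Q}}(Orb_{2,3,5}(x,y))=k$ is finite. By Lemma~\ref{LM1} every element of $Der_{\mathbb{Q}}$ must be a rational direction, since an irrational direction would force $A=\mathbb{T}^2$ and hence infinitely many rational directions in $Der_\mathbb{Q}$. List the directions as $v_1,\ldots,v_k$ and apply an $SL_2(\mathbb{Z})$ coordinate change so that $v_1=(0,1)$; this replaces $(x,y)$ with $(x',y')=(x,y)L^T$, a pair still irrational and not rationally related. Let $A=\overline{Orb_{2,3,5}(x',y')}$.

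For each $v_i$, the set $A$ contains only finitely many $v_i$-directional lines, each necessarily rational: the set of $y$-intercepts of such lines is a closed $T_2,T_3$-invariant subset of $\mathbb{T}$, hence by Furstenberg's theorem either finite and rational or all of $\mathbb{T}$, in which case $A=\mathbb{T}^2$. Let $\mathcal{L}$ be the resulting finite family and pick an integer $N$ so that every line in $\mathcal{L}$ has intercept in $\mathbb{Z}N^{-1}$; then $T_N$ sends each line of $\mathcal{L}$ onto one of the $k$ homogeneous lines $l_{v_1},\ldots,l_{v_k}$ through the origin. Let $\tilde{\mathcal{L}}$ be the union of all $T_N$-preimages of $l_{v_1},\ldots,l_{v_k}$.

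The core step, extending the heart of Lemma~\ref{ATLEASTTWO}, is to show that every rational point $(r,s)\in A$ with $r\notin \mathbb{Z}N^{-1}$ lies on some line of $\tilde{\mathcal{L}}$. Orbit points of $(x',y')$ all have irrational first coordinate, so $(r,s)$ is not in the orbit and $Der_{(r,s)}(A)\neq\emptyset$; by the finiteness assumption it is contained in $\{v_1,\ldots,v_k\}$. The vertical direction $v_1=(0,1)$ is ruled out immediately, for otherwise $\{r\}\times[0,1]$ would be a $v_1$-line in $A$ with intercept $r\notin\mathbb{Z}N^{-1}$, contradicting the construction of $\mathcal{L}$ and $N$. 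For each remaining $v_i\in Der_{(r,s)}(A)$, writing $l_{v_i}=\{k_ix+y=0\}$, if $k_ir+s\notin\mathbb{Z}N^{-1}$ then Lemma~\ref{235} supplies two actions $a,b\in\{2,3,5\}$ for which $a^mb^n(k_ir+s)\notin\mathbb{Z}N^{-1}$ for every $m,n\geq 0$; since this quantity is the $v_i$-intercept of $T_a^mT_b^n(r,s)$, the Furstenberg-type argument inside Lemma~\ref{LM1} applied with just $T_a,T_b$ produces a $v_i$-line in $A$ whose intercept is outside $\mathbb{Z}N^{-1}$, contradicting the definition of $\mathcal{L}$. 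Hence $k_ir+s\in\mathbb{Z}N^{-1}$ and $(r,s)$ lies on a line of $\tilde{\mathcal{L}}$.

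Finally, I would dilate by $N$: set $(x'',y'')=(Nx',Ny')$ and $A''=T_N(A)=\overline{Orb_{2,3,5}(x'',y'')}$. Any rational $(r,s)\in A''$ lifts under $T_N$ to a rational point $b\in A$; if the first coordinate of $b$ is in $\mathbb{Z}N^{-1}$ then $r=0$ and $(r,s)\in l_{v_1}$, and otherwise the previous step places $b$ on $\tilde{\mathcal{L}}$ so that $T_N$ sends it into $l_{v_1}\cup\cdots\cup l_{v_k}$. Thus $A''\cap\mathbb{Q}^2\subset\bigcup_{i=1}^{k} l_{v_i}$, which is a linear form system. Because $T_N$ preserves local directions, $(0,1)\in Der_{\mathbb{Q}}(A'')$, so Lemma~\ref{MAINLEMMA} forces $x''=Nx'\in\mathbb{Q}$ and hence $x'\in\mathbb{Q}$. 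But $x'$ is a nontrivial $\mathbb{Z}$-linear combination of $x,y$ coming from $L\in SL_2(\mathbb{Z})$, so $x,y$ would be rationally related, contradicting the hypothesis. The main obstacle is the third paragraph: one has to treat the vertical direction separately from the other $v_i$, apply Lemma~\ref{235} independently to each non-vertical direction, and patch these arguments together so that the conclusion about rational points on $\tilde{\mathcal{L}}$ holds for every $v_i\in Der_{(r,s)}(A)$ simultaneously.
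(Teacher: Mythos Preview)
Your proposal is correct and follows essentially the same route as the paper's proof: assume finitely many rational directions, normalise so that $(0,1)$ is among them, collect the finitely many rational lines in $A$ into $\mathcal{L}$, choose $N$ so that $T_N$ homogenises them, show via Lemma~\ref{235} and the Furstenberg argument of Lemma~\ref{LM1} that rational points of $A$ off the vertical grid lie on $\tilde{\mathcal{L}}$, dilate by $N$, and invoke Lemma~\ref{MAINLEMMA}. Your write-up is in fact more explicit than the paper's terse version, which compresses the third paragraph into a single appeal to Lemma~\ref{235}. One small imprecision: when you rule out $v_1=(0,1)$ from $Der_{(r,s)}(A)$, you assert that $\{r\}\times[0,1]$ itself would lie in $A$, but Lemma~\ref{LM1} only yields \emph{some} vertical line through a rational point in the $T_a,T_b$-orbit of $(r,s)$; you should apply Lemma~\ref{235} to $r\notin\mathbb{Z}N^{-1}$ exactly as you do for the non-vertical $v_i$, obtaining a vertical line $\{r'\}\times[0,1]\subset A$ with $r'\notin\mathbb{Z}N^{-1}$, which is the contradiction you want.
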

	\begin{proof}
		Suppose that $k\geq 3$ is an integer and $\#Dir_{\mathbb{Q}}(Orb_{2,3,5}(x,y))=k.$ Then the elements in $Dir_{\mathbb{Q}}(Orb_{2,3,5}(x,y))$ must be rational. By a multiplication with a matrix in $SL_2(\mathbb{Z})$ we shall assume that $(0,1)$ is in this local direction set. The directions in $Dir_{\mathbb{Q}}(Orb_{2,3,5}(x,y))$ determine a linear form system $L$. For each line $l$ in this system, we see that $A=\overline{Orb_{2,3,5}(x,y)}$ can contain at most finitely many necessarily rational lines  in the direction of $l.$ We collect this lines in all possible directions as $\mathcal{L}.$	Thus we can find an integer $M$ such that $T_M$ sends lines in $\mathcal{L}$ to homogeneous lines. We consider $Orb_{2,3,5}(Mx,My).$ Let $\tilde{\mathcal{L}}$ be an extension of $\mathcal{L}$ by including all pre-images of $L$ under $T_M.$ Whenever there are points in $Orb_{2,3,5}(Mx,My)$ approaching a rational point not on the linear form system $L$, we see that there is a sequence in $Orb_{2,3,5}(x,y)$ approaching a rational number which is not contained in any of the lines in $\tilde{\mathcal{L}}.$ By Lemma \ref{235} we see that $A$ contains a rational line not in $\tilde{\mathcal{L}}$, this contradicts the construction of $\mathcal{L}$ and $\tilde{\mathcal{L}}.$ Thus we see that the only rational points on the closure of $Orb_{2,3,5}(Mx,My)$ are contained in lines in  $\tilde{\mathcal{L}}.$ By Lemma \ref{MAINLEMMA}, we see that $Mx$ must be a rational number and this would be a contradiction. From here we see that
		\[
		\#Dir_{\mathbb{Q}}(Orb_{2,3,5}(x,y))=k
		\]
		cannot hold for any integer $k\geq 3.$ As we have seen that 
		\[
		\#Dir_{\mathbb{Q}}(Orb_{2,3,5}(x,y))\geq 3
		\]
		we conclude that
		\[
		\#Dir_{\mathbb{Q}}(Orb_{2,3,5}(x,y))=\infty.
		\]
	\end{proof}
	
	Theorem \ref{MAIN} is then proved. We give a formal conclusion below.
	\begin{proof}[Proof of Theorem \ref{MAIN}]
		We see that Theorem \ref{MAIN} is a direct consequence of Lemma \ref{manydirections2} and Lemma \ref{manydirections}.
	\end{proof}

	\section{Acknowledgement}
	HY was financially supported by the University of Cambridge and the Corpus Christi College, Cambridge. HY has received funding from the European Research Council (ERC) under the European Union’s Horizon 2020 research and innovation programme (grant agreement No. 803711). HY thanks P\'{e}ter Varj\'{u} for various helpful comments.
	\providecommand{\bysame}{\leavevmode\hbox to3em{\hrulefill}\thinspace}
	\providecommand{\MR}{\relax\ifhmode\unskip\space\fi MR }
	\providecommand{\MRhref}[2]{%
		\href{http://www.ams.org/mathscinet-getitem?mr=#1}{#2}
	}
	\providecommand{\href}[2]{#2}

\end{document}